\newcommand{\rmd}{\, {\rm d}} 
\newcommand{\BB}{\boldsymbol{B}}
\newcommand{\ee}{\boldsymbol{e}}
\newcommand{\uu}{\boldsymbol{u}}
\newcommand{\vv}{\boldsymbol{v}}
\newcommand{\ww}{\boldsymbol{w}}
\newcommand{\g}{\mathfrak{g}}
\newcommand{\rhoh}{\rho_h}
\newcommand{\sh}{s_h}
\newcommand{\Bh}{\BB_h}
\newcommand{\uh}{\uu_h}
\newcommand{\LL}{\mathcal{L}}
\newcommand{\R}{\mathbb{R}}
\newcommand{\kin}{\mathrm{kin}}
\newcommand{\E}[1]{\mathrm{E{#1}}}
\DeclareMathOperator{\curl}{curl}
\DeclareMathOperator{\Div}{div}
\DeclareMathOperator{\grad}{grad}
\newtheorem{theorem}{Theorem}
\newtheorem{proposition}[theorem]{Proposition}%
\newtheorem{remark}{Remark}%
\newtheorem{hypothesis}{Hypothesis}
\newtheorem{definition}{Definition}
\crefname{hypothesis}{hypothesis}{hypotheses}
\crefname{tab}{Table}{Tables}
\crefname{definition}{}{}
\begin{document}

\begin{frontmatter}

\title{Variational discretizations of ideal magnetohydrodynamics in smooth regime using finite element exterior calculus}

\author{Valentin Carlier\corref{cor1}}
\ead{vcarlier@ipp.mpg.de}
\author{Martin Campos-Pinto}
\ead{martin.campos-pinto@ipp.mpg.de}
\cortext[cor1]{Corresponding author}
\affiliation[1]{Max-Planck-Institut fur Plasmaphysik
Boltzmannstrasse 2, Garching b. Munchen, Germany}
\date{\today}

\begin{abstract}
We propose a new class of finite element approximations to ideal compressible magnetohydrodynamic equations in smooth regime. 
Following variational approximations developed for fluid models in the last decade, our discretizations are built via a discrete variational principle mimicking the continuous Euler-Poincaré principle, and to further exploit the geometrical structure of the problem, vector fields are represented by their action as Lie derivatives on differential forms of any degree. 
The resulting semi-discrete approximations are shown to conserve the total mass, entropy and energy of the solutions for a wide class of finite element approximations.
In addition, the divergence-free nature of the magnetic field is preserved in a pointwise sense and a time discretization is proposed, preserving those invariants and giving a reversible scheme at the fully discrete level. 
Numerical simulations are conducted to verify the accuracy of our approach and its ability to preserve the invariants for several test problems.
\end{abstract}

\end{frontmatter}

\section{Introduction}

The ability to perform stable and accurate simulations of magnetohydrodynamic (MHD) equations is of primal importance in several fields. These models are for instance central in astrophysics, as they describe an important part of the visible matter in the universe~\cite{goossens2003introduction}, and in geophysics where the dynamics of the earth core and magnetic field also involve magnetized fluid described by MHD equations~\cite{molokov2007magnetohydrodynamics}.
In the physics of magnetic fusion they play a key role to study the macroscopic stability of plasma equilibria~\cite{freidberg1982ideal}. 
As the latter field is the main motivation of this work, we shall focus on deriving approximations which behave well on smooth solutions -- thus excluding astrophysical applications -- and on ideal flows -- while geophysical flows are dissipative.

The classical approach to discretizing MHD equations relies on their hyperbolic formulation, with well-established numerical methods such as Finite Volume~\cite{cargo1997roe} or Discontinuous Galerkin schemes ~\cite{bohm2020entropy,fambri2018ader}. 
To preserve the divergence-free constraint these schemes use different techniques, for instance constrained transport~\cite{evans1988simulation,stone2008athena} or generalized Lagrange multiplier methods to clean the divergence~\cite{dedner2002hyperbolic}. 

Since our focus here is on smooth regimes, we follow a variational approach where MHD equations can be seen 
as extremal curves of a Lagrangian action functional under specific constraints corresponding to a Euler-Poincaré reduction, as described in \cite{holm1998euler,newcomb1961lagrangian,maj2017mathematical}.
Our main motivation lies in the very good conservation and stability properties that variational discretizations generally enjoy.
For perfect fluids a variational approach was originally explored in \cite{pavlov2011structure} at low order, then carried to more general models in~\cite{gawlik2011geometric}. High order versions using finite element discretizations were then proposed in~\cite{natale2018variational} for the incompressible Navier Stokes equation, and thoroughly studied in~\cite{gawlik2020conservative,gawlik2021structure,gawlik2021variational,gawlik2022finite}
where variational discretizations for incompressible fluids with variable mass, compressible fluids, incompressible and barotropic MHD were described and implemented.  

In this work we propose a new class of variational schemes for ideal compressible MHD equations, which directly follow from a discrete Euler-Poincaré principle where the action of discrete vector fields on differential forms is defined through Cartan's formula for the Lie derivative.
Driven by a geometrical formulation of the MHD equations, we use the Finite Element Exterior Calculus (FEEC) framework ~\cite{arnold_falk_winther_2006_anum,arnold2018finite} to discretize the differential forms as well as the vector fields, and
finite element projection operators to define the interior products underlying our discrete Lie derivatives. 
A key point is that the main conservation properties of our schemes hold for general choices of FEEC spaces and projection operators.

FEEC discretizations involve several spaces to represent different unknowns, related to their geometrical nature. They have been first developed and used for linear problems such as Maxwell's equations~\cite{Bossavit.1998.ap,hiptmair2002finite} where classical finite element methods fail to properly approximate solutions on domains with holes or reentrant corners, and later studied in a more general context to approximate
Hodge Laplace operators \cite{arnold_falk_winther_2006_anum,arnold2018finite}.
A central component of this approach is the preservation of calculus identities $\curl \grad =0$ and $\Div \curl = 0$ at a discrete level, which makes it very well suited for electromagnetism and MHD where divergence constraints such as $\Div \BB =0$ play a crucial role. In particular this theory provides a natural discretization of differential forms. 
However, the discretization of vector fields, as well as transport operators,
is not canonical and is subject to a numerical design choice. 
In \cite{heumann_stabilized_2013,heumann2015stabilized} for instance, stable advection operators 
have been proposed for the transport of FEEC differential forms of any degree, based on weak formulations 
to handle velocity fields with low regularity.
Here we follow~\cite{pavlov2011structure,natale2018variational,gawlik2021variational} where vector fields are discretized via their transport action on densities, and generalize this principle to the transport of arbitrary forms. 
Previous works applying the FEEC framework to non linear models include~\cite{palha_mass_2017,carlier2023mass} for the incompressible Navier-Stokes equation, \cite{lee2020mixed} for the (rotating) compressible Euler equation, and~\cite{gawlik2021structure,gawlik2022finite,hu2021helicity,hu2017stable,hu2019structure,hiptmair_splitting-based_2018} for the MHD equations. 
We note that \cite{gawlik2021structure,gawlik2022finite} are also based on a Lagrangian formulation. 

A novelty of our work is the use a (strong) Lie-Cartan formula to discretize the transport of differential forms of any degree, leading to a discrete Euler-Poincaré principle with advected parameters. 
This approach provides a variational framework that allows for a wide class of conforming finite element spaces and discrete Lie derivatives
which commute with the exterior derivative. 
As a result, this framework allows us to preserve several invariants such as total mass, energy, entropy at the semi-discrete level, 
as well as a strong divergence-free property for the magnetic field. 
We further devise a time discretization that preserves the semi-discrete invariants at the fully discrete level, and is fully reversible. 
The non-dissipative character of our method is a promising feature for the simulation of magnetic fusion devices,
where current codes often require a level of artificial dissipation for numerical stability that is higher than the actual values observed in fusion plasmas~\cite{nikulsin2022jorek3d}.
In addition we show that our scheme is able to achieve high order accuracy for simple test-cases, which seems to be a novelty.

The remaining of this work is organized as follows: in \cref{sec:review_var_prin} we briefly review the variational principle underlying the ideal MHD equations and we reformulate it to allow for a natural discretization with FEEC spaces. In \cref{sec:discretization} we describe our discretization in a general setting and prove our conservation results. We next specify some choices leading to one particular scheme, and in \cref{sec:numerics} we present numerical results obtained with the resulting method. Section \ref{sec:conclusion} gathers some concluding remarks.

\section{Review of the variational structure of ideal MHD equations}
\label{sec:review_var_prin}

We review in this section the variational principle underlying ideal magnetohydrodynamics. After recalling the equations in classical form, we state the action principle in Eulerian variables which leads us to consider the associated constrained variations. We then rewrite the Lagrangian using differential forms to give a unified approach to the advection equations and the constrained variations.

\subsection{Equations of the ideal Magnetohydrodynamics}

The ideal MHD system for smooth flows is given by :
\begin{subequations}
\label{eqn:MHD}
\begin{equation}
\label{eqn:MHD_mass}
\partial_t \rho + \Div(\rho \uu) = 0 ~,
\end{equation}
\begin{equation}
\label{eqn:MHD_mom}
\rho \partial_t \uu + \rho (\uu \cdot \nabla \uu) + \nabla p + \BB \times \curl \BB  = 0 ~, 
\end{equation}
\begin{equation}
\label{eqn:MHD_s}
\partial_t s + \Div(s \uu) = 0 ~,
\end{equation}
\begin{equation}
\label{eqn:MHD_B}
\partial_t \BB + \curl(\BB \times \uu) = 0 ~,
\end{equation}
\end{subequations}
where $\rho$ is the plasma density, $\uu$ the velocity field, $s$ the entropy density and $\BB$ the magnetic field.
Here \cref{eqn:MHD_mass} describes the transport of density, \cref{eqn:MHD_mom} is the evolution of the momentum (analogous to Newton's second law), \cref{eqn:MHD_s} is the transport of entropy (analogous to the second law of thermodynamics for a reversible system), and \cref{eqn:MHD_B} describes the evolution of the magnetic field  assuming that the plasma is a perfect conductor. Equations \eqref{eqn:MHD_mass} and \eqref{eqn:MHD_s} can be combined to express the evolution of internal energy or pressure, leading to a more common form for the MHD system. 
These equations which describe the evolution of a plasma by neglecting any resistive effect (either the magnetic resistivity modelling the imperfect conducting properties of a plasma, or the fluid viscosity modelling its internal friction) are a good first approximation for magnetic fusion plasmas.

\subsection{Lagrangian with proxy field}

Our variational discretization is based on the following classical result~\cite{newcomb1961lagrangian}, which states that solutions to the ideal MHD equations are extrema of an action functional. Given $\Omega$ a domain in $\R^n$ (we will consider here the cases $n=2,3$), we denote by $X(\Omega)$ the set of vector fields in $\Omega$ that are tangent to the boundary $\partial \Omega$. 

\begin{theorem}
\label{thm:mhd_var_lag}
Let $l$ be the following Lagrangian :
\begin{equation}
    \label{eqn:MHD_reduced_lagrangian}
    l(\uu,\rho, s, \BB)=\int_\Omega \frac{1}{2}\rho|u|^2-\rho e(\rho, s) -\frac{1}{2}|\BB|^2 dV ~,
\end{equation}
where $e$ is the internal energy depending on $\rho$ the density and $s$ the entropy, $\uu$ is the velocity of the fluid and $\BB$ the magnetic field.
Smooth solutions of the ideal MHD equations correspond to extremal curves of the corresponding action 
\begin{equation}
    \label{eqn:MHD_reduced_action}
    S(\uu,\rho, s, \BB) = \int_0^T l(\uu(t),\rho(t), s(t), \BB(t)) dt~,
\end{equation}
under constrained variations of the form 
\begin{equation} 
  \label{constraints}
    \delta \uu = \partial_t \vv + [\uu,\vv], 
    \qquad 
    \delta \rho = -\Div (\rho \vv),
    \qquad
    \delta s = -\Div (s \vv),
    \qquad
    \delta \BB = -\curl(\BB\times \vv)~,
\end{equation}
where $\vv = \vv(t)$ is a curve in $X(\Omega)$ which is null at both end-points.
Here the Lie bracket $[\cdot , \cdot]$ is defined by
\begin{equation}
\label{eqn:Lie-bracket}
[\uu, \vv] = \sum\limits_{i=1}^n \sum\limits_{j=1}^n (u_j \partial_j v_i - v_j \partial_j u_i) \ee_i = \uu \cdot \nabla \vv - \vv \cdot \nabla \uu ~,
\end{equation}
where $(u_i)$ (resp. $(v_i)$) are the components of $\uu$ (resp. $\vv$) in the canonical basis $(\ee_i)$ of $\R^n$.

This variational principle is supplemented with the advection equations
\begin{subequations}
\begin{equation}
\label{eqn:advection_rho}
\partial_t \rho + \Div(\rho \uu) = 0 ~,
\end{equation}
\begin{equation}
\label{eqn:advection_s}
\partial_t s + \Div(s \uu) = 0 ~,
\end{equation}
\begin{equation}
\label{eqn:advection_B}
\partial_t \BB + \curl(\BB \times \uu) = 0 ~
\end{equation}
\end{subequations}
and the constraint $\Div \BB = 0$. It is enough for this latter constraint to be satisfied at one given time, since \cref{eqn:advection_B} then guarantees its preservation at all times.
\end{theorem}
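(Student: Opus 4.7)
The plan is to take the advection equations \eqref{eqn:advection_rho}--\eqref{eqn:advection_B} as supplementary data and derive the momentum equation \eqref{eqn:MHD_mom} from stationarity of $S$ under the constrained variations \eqref{constraints}. Preservation of the constraint $\Div\BB = 0$ is essentially free: applying $\Div$ to \eqref{eqn:advection_B} and using $\Div\curl = 0$ gives $\partial_t\Div\BB = 0$, so the constraint propagates from any initial time.

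For the variational step I would compute $\delta l$ by the chain rule, with $\partial_\uu l = \rho \uu$, $\partial_\rho l = \tfrac{1}{2}|\uu|^2 - \partial_\rho(\rho e)$, $\partial_s l = -\partial_s(\rho e)$, $\partial_\BB l = -\BB$, then substitute the four variations \eqref{constraints}. Integration by parts in time uses $\vv(0)=\vv(T)=0$, and in space uses $\vv\cdot\nn = 0$ together with the perfect-conductor boundary condition $\BB\cdot\nn = 0$ to discard the magnetic surface term. The goal is to rewrite $\delta S$ as $\int_0^T\!\int_\Omega \vv\cdot(\cdots)\, dV\, dt$ and read off the Euler--Lagrange equation.

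The bulk of the work is the rearrangement. The time piece $\int_0^T \rho\uu\cdot\partial_t\vv\, dt$ becomes $-\int\partial_t(\rho\uu)\cdot\vv$, and the Lie bracket $\rho\uu\cdot[\uu,\vv]$, after spatial integration by parts, produces a $\partial_j(\rho u_j)\, u_i v_i$ factor which the continuity equation \eqref{eqn:advection_rho} cancels against the $(\partial_t\rho)\uu\cdot\vv$ piece; what remains is $-\rho(\uu\cdot\nabla)\uu\cdot\vv$ together with a $-\tfrac{1}{2}\rho\vv\cdot\nabla|\uu|^2$ tail. The latter is absorbed by the $\tfrac{1}{2}|\uu|^2$ part of the $\delta\rho$ contribution. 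The remaining thermodynamic pieces $-\int\rho\vv\cdot\nabla\mu$ and $-\int s\vv\cdot\nabla T$, with $\mu = \partial_\rho(\rho e)$ and $T = \partial_s(\rho e)$, combine via the Gibbs--Duhem relation $\nabla p = \rho\nabla\mu + s\nabla T$ (which follows from the Legendre identity $p = \rho\mu + sT - \rho e$) into $-\int\vv\cdot\nabla p$. For the magnetic term, the self-adjointness of $\curl$ and the cyclic identity $(\BB\times\vv)\cdot\curl\BB = -\vv\cdot(\BB\times\curl\BB)$ produce the Lorentz force. Collecting signs yields $\delta S = -\int_0^T\!\int_\Omega \vv\cdot\bigl[\rho\partial_t\uu + \rho(\uu\cdot\nabla)\uu + \nabla p + \BB\times\curl\BB\bigr]\, dV\, dt$, and arbitrariness of $\vv$ delivers \eqref{eqn:MHD_mom}.

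The main obstacle is the bookkeeping in the Lie-bracket step: one must deploy continuity at exactly the right moment to eliminate $\partial_t\rho$ and end up with the material derivative $\rho(\partial_t + \uu\cdot\nabla)\uu$. The thermodynamic combination is standard but demands care with conventions, since here $\rho$ and $s$ are volumetric densities rather than specific quantities, which affects whether the variations are $-\Div(\rho\vv), -\Div(s\vv)$ (as written) or $-\vv\cdot\nabla\rho,-\vv\cdot\nabla s$.
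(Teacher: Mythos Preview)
Your proposal is correct and follows essentially the same route as the paper: compute the four functional derivatives, substitute the constrained variations, integrate by parts in time and space, and combine the thermodynamic pieces into $\nabla p$ before reading off the momentum equation. The only cosmetic differences are that the paper invokes the continuity equation at the very end (first obtaining $\partial_t(\rho\uu) + \rho(\uu\cdot\nabla)\uu + \uu\,\Div(\rho\uu) + \nabla p + \BB\times\curl\BB = 0$ and then simplifying) rather than mid-computation, and it expands the pressure identity $p = \rho(\rho\,\partial_\rho e + s\,\partial_s e)$ directly instead of packaging it via your Gibbs--Duhem relation $\nabla p = \rho\nabla\mu + s\nabla T$.
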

\begin{remark}
The form of the constrained variations, as well as the advection equations, come from the Euler-Poincaré reduction of a general Lagrangian on the diffeomorphism group, with advected parameters (see~\cite{holm1998euler, gawlik2011geometric}).
\end{remark}
\begin{proof}
Let $\uu$ be an extremal curve of the action $S = \int_0^T l(\uu, \rho, s, \BB)$ under the constraints \eqref{constraints}.
Then for every curve $\vv$ in $X(\Omega)$ we have :
\begin{equation} \label{EL}
        0 = \frac{\delta S}{\delta \uu} \delta \uu + \frac{\delta S}{\delta \rho} \delta \rho + \frac{\delta S}{\delta s} \delta s + \frac{\delta S}{\delta \BB} \delta \BB 
        = \int_0^T \frac{\delta l}{\delta \uu} \delta \uu + \frac{\delta l}{\delta \rho} \delta \rho + \frac{\delta l}{\delta s} \delta s + \frac{\delta l}{\delta \BB} \delta \BB.
\end{equation}
%
%
Using integration by parts we develop the first term as
$$
\begin{aligned}
    \int_0^T \frac{\delta l}{\delta \uu} \delta \uu 
        &=  \int_0^T \int_\Omega \rho \uu \cdot (\partial_t \vv + \uu \cdot \nabla \vv - \vv \cdot \nabla \uu)
        \\
        &=    -\int_0^T \int_\Omega \partial_t(\rho \uu) \cdot \vv + \rho (\uu \cdot \nabla \uu) \cdot \vv + \uu \Div(\rho \uu) \cdot \vv  + (\nabla \uu)^T (\rho \uu) \vv
\end{aligned}
$$
the second term as
$$
\begin{aligned}
    \int_0^T \frac{\delta l}{\delta \rho} \delta \rho
        &=  - \int_0^T \int_\Omega \Big( \frac{|\uu|^2}{2}-e(\rho, s)-\rho \partial_{\rho} e(\rho, s)\Big) \Div(\rho \vv)
        \\
        &=  \int_0^T \int_\Omega \nabla \Big( \frac{|\uu|^2}{2} -e(\rho, s)-\rho \partial_{\rho} e(\rho, s)\Big) \cdot (\rho \vv)
        \\
        &=  \int_0^T \int_\Omega \big( (\nabla \uu)^T (\rho \uu) - \nabla \rho \partial_{\rho} e(\rho, s) - \nabla s \partial_{s} e(\rho, s) - \nabla (\rho \partial_{\rho} e(\rho, s))\big) \cdot (\rho \vv)
\end{aligned}
$$
the third term as
$$
    \int_0^T \frac{\delta l}{\delta s} \delta s
        =  \int_0^T \int_\Omega \rho \partial_s e(\rho, s) \Div (s \vv)
        =  - \int_0^T \int_\Omega \nabla (\rho \partial_s e(\rho, s)) \cdot (s \vv)
$$
and the fourth term as
$$
    \int_0^T  \frac{\delta l}{\delta \BB} \delta \BB
        =  \int_0^T \int_\Omega  \BB \cdot \curl(\BB \times \vv)
        =  -\int_0^T \int_\Omega  (\BB \times \curl \BB) \cdot \vv ~.
$$
Introducing the pressure defined as $p=\rho(\rho \partial_{\rho} e+ s \partial_s e)$ we next observe that
$$
\rho \big(\nabla \rho \partial_{\rho} e(\rho, s) + \nabla s \partial_{s} e(\rho, s) + \nabla (\rho \partial_{\rho} e(\rho, s))\big) + s \nabla (\rho \partial_s e(\rho, s))
     = \nabla p~,
$$
so that \eqref{EL} being null for every $\vv$ yields

%

$$
0 
= \partial_t(\rho \uu) + \rho (\uu \cdot \nabla \uu) + \uu \Div(\rho \uu) + \nabla p + \BB \times \curl \BB ~. 
$$

Finally, developing $\partial_t(\rho \uu) = \uu \partial_t \rho + \rho \partial_t \uu$ and using \cref{eqn:advection_rho} gives us the momentum equation for ideal MHD in its usual form : 
\begin{equation}
\label{eqn:momentum_MHD}
\rho \partial_t \uu + \rho (\uu \cdot \nabla \uu) + \nabla p + \BB \times \curl \BB  = 0~.
\end{equation}
\end{proof}
\subsection{Lagrangian with differential forms}
\label{sec:lag_diff}
We now rewrite the above principle using differential forms: we refer to \cref{tab:proxy} and~\cite{heumann2015stabilized,hiptmair2002finite,arnold_falk_winther_2006_anum} 
for more details on differential forms and their proxy vectors.
\begin{table}
\begin{tabular}{|p{1.7cm}||p{1.4cm}|p{3.9cm}|p{5cm}|p{2.2cm}|}
 \hline
 $\omega \in \Lambda^k(\Omega)$ 
    & $k=0$ 
      & $k=1$ 
        & $k=2$ 
          & $k=3$ \\
  $\omega = \cdots$
   & $w$ 
     & $w_x dx + w_y dy + w_z dz$ 
       & $w_x dy \wedge dz + w_y dz \wedge dx + w_z dx \wedge dy$ 
         & $w \, dx \wedge dy \wedge dz$ \\
 \hline
 \hline
 $\omega$     & $w$ & $\ww = (w_x, w_y, w_z)$ & $\ww = (w_x, w_y, w_z)$ & $w$ \\
 \hline
 $d \omega$  & $\grad w$ & $\curl \ww$ & $\Div \ww$ & $0$ \\
 \hline
 $i_{\vv} \omega $ & 0 & $\ww \cdot \vv$ & $\ww \times \vv$ & $w \vv$ \\
 \hline
 $L_{\vv} \omega $ & $\vv \cdot \grad w$ & $\grad (\ww \cdot \vv) + (\curl \ww) \times \vv $ & $\curl(\ww \times \vv) + (\Div \ww) \vv $ & $\Div(w \vv)$ \\
 \hline

\end{tabular}
\caption{Correspondence between differential forms (in the lower-left column) and their vector proxies (in the lower-right table) for the operators used in this article, namely the exterior derivative, interior product and Lie derivative associated with a vector field $\vv$.}
\label{tab:proxy}
\end{table}
Denoting $\varrho$ for $\rho dV$, $\varsigma$ for $s dV$ and $\beta=B_x dy\wedge dz + B_y dz \wedge dx + B_z dx \wedge dy$, the reduced Lagrangian can be rewritten :
\begin{equation}
    \label{eqn:MHD_reduced_lagrangian_forms}
    l(\uu,\varrho, \varsigma, \beta)=\int_\Omega \frac{1}{2}\varrho|u|^2-\rho e(\varrho, \varsigma) -\frac{1}{2}\beta \wedge \star \beta ~,
\end{equation}
where $\star$ denotes the Hodge operator (which here maps the $(n-1)$ form $\beta$ to a 1 form). The ideal MHD system is recovered considering extremizor of the action of this lagrangian under constrained variations of the form (comes from a simple rewriting of \cref{thm:mhd_var_lag}) : 
\begin{equation}
    \delta \uu = \partial_t \vv + [\uu,\vv] ~, 
    \qquad 
    \delta \rho = -\LL_{\vv} \rho ~,
    \qquad
    \delta \varsigma = -\LL_{\vv} \rho \varsigma ~,
    \qquad
    \delta \beta = -\LL_{\vv} \beta ~,
\end{equation}
where $\vv$ is a curve in $X(\Omega)$ which is null at both end-points.

The variational principle is supplemented with the following general advection equations : 
\begin{equation}
\partial_t \varrho + \LL_{\uu}\varrho  = 0 ~, 
\qquad 
\partial_t \varsigma + \LL_{\uu}\varsigma = 0 ~, 
\qquad 
\partial_t \beta + \LL_{\uu} \beta = 0 ~.
\end{equation}
We have here denoted $\LL_{\uu}$ the Lie derivative, defined by $\LL_{\uu} \alpha = \frac{d}{dt}|_{t=0} \phi^*_t \alpha$, where $\phi_t$ denotes the flow of $\uu$ and $\phi_t^*$ the pullback by $\phi_t$. We will here make extensive use of Cartan's ``magic'' formula $\LL_{\uu} \alpha = d i_{\uu} \alpha + i_{\uu} d \alpha$ 
where $i_{\uu}$ denotes the interior product.

\subsection{Vector fields as operators on forms}
\label{sec:Vector_as_operator}
When we look at the variational principle from the previous section, we see that vector fields appear mostly as acting on differential forms (except in the kinetic energy term). Indeed the action by Lie derivative of a vector field, gives a representation of $X(\Omega)$ in $\Lambda^0 \times \Lambda^1 \times ... \times \Lambda^n$ where $\Lambda^k$ denotes the differential k-forms on $\Omega$. Since this action commutes with the exterior derivative, we can identify $X(\Omega)$ as a subspace of the following Lie algebra : 
\begin{equation}
\label{eqn:g_deff}
\g = \{(A^0,...,A^n) \in L(\Lambda^0) \times ... \times L(\Lambda^n) : d A^{i-1} = A^i d \} ~ , 
\end{equation}
where the Lie bracket is given by the traditional commutator $([A,B])^i = A^i B^i - B^i A^i $ which clearly leaves $\g$ invariant.
This representation will be the center of the variational discretization we propose in the following section. 
Let denote $\g'$, the subalgebra of $\g$ of operators actually representing vector fields :
\begin{equation}
\label{eqn:g_prim_def}
\g' = \{A = (A^0,A^1,...,A^n) \in \g | \ \exists \uu \in X(\Omega) : A^i \alpha = \LL_{\uu} \alpha^i \ \forall \alpha^i \in \Lambda^i, i=0,...,n \} ~ .
\end{equation}
Then for any $A = (A^i)_{i=0,...,n} \in \g'$ there exists a unique vector field 
$\hat{A} \in X(\Omega)$ such that for all i-form $\alpha^i $,  the relation $\LL_{\hat{A}} \alpha^i = A^i \alpha^i$ holds.

We can therefore rewrite the previous Lagrangian as :
\begin{equation}
    \label{eqn:MHD_reduced_lagrangian_operator}
    l(A,\varrho, \varsigma, \beta)=\int_\Omega \frac{1}{2}\varrho|\hat{A}|^2-\varrho e(\varrho, \varsigma) -\frac{1}{2}\beta \wedge \star \beta ~,
\end{equation}
and the variational principle associated is $\delta S = 0$ (with $S$ the action of $l$ as defined in \cref{eqn:MHD_reduced_action}) under variations of the form
\begin{equation}
\delta A = \partial_t B + [B, A]~,
\quad
\delta \varrho = - B^n \varrho ~, 
\quad
\delta \varsigma = - B^n \varsigma ~,
\quad
\delta \beta = - B^{n-1} \beta ~,
\end{equation}
with $B = B(t)$ a general curve on $\g'$, null at end-points. This variational principle is to be supplemented with the following advection equations : 
\begin{equation}
\partial_t \varrho + A^n\varrho  = 0 ~, 
\quad
\partial_t \varsigma + A^n\varsigma = 0 ~, 
\quad
\partial_t \beta + A^{n-1} \beta = 0 ~.
\end{equation}
\section{Discretization}
\label{sec:discretization}

We now describe our discretization in a rather general setting. We first highlight the hypotheses on the discrete spaces and operators on which it relies, and later in \cref{sec:Xh_map_pi,sec:numerics} we will specify the choices used in our numerical experiments.

\subsection{Discrete forms : FEEC}
The preservation of some crucial invariants such as total mass, entropy and $\Div \BB = 0$ requires a good discrete representation of differential forms. What we primarily need is a discrete structure that preserves the de Rham sequence. In vector proxy language, we want discrete spaces on which the calculus identities $\curl \grad = \Div \curl =0$ hold. Such discretizations have been investigated during the last decades, giving birth to the theories of Discrete Exterior Calculus (DEC)~\cite{hirani2003discrete} and Finite Element Exterior Calculus (FEEC)~\cite{arnold2018finite,arnold_falk_winther_2006_anum}. This yields our first hypothesis :
\begin{hypothesis}
\label{hyp:DeRham}
We consider a sequence of discrete spaces $(V_h^i)_{i=0,...,n}$ such that $d^i(V_h^i) \subset V_h^{i+1}$.
This implies that the relation $d \circ d = 0$ holds in every discrete space.
\end{hypothesis}
In a 3 dimensional setting and with vector proxy notation, this means that we consider four spaces $(V^0_h, V^1_h, V^2_h, V^3_h)$, such that :
\begin{itemize}
\item The gradient is well defined on $V_h^0$ ($V_h^0 \subset H^1(\Omega)$) and $\grad(V_h^0) \subset V_h^1$.
\item The curl is well defined on $V_h^1$ ($V_h^1 \subset H(\curl,\Omega)$) and $\curl(V_h^1) \subset V_h^2$.
\item The divergence is well defined on $V_h^2$ ($V_h^2 \subset H(\Div,\Omega)$) and $\Div(V_h^2) \subset V_h^3$.
\end{itemize}
Hence, we will look for a discrete density $\rhoh \in V_h^n$, entropy $\sh \in V_h^n$ and magnetic field $\Bh \in V_h^{n-1}$.

\subsection{Discrete vector fields}

For the discretization of vector fields we follow the point of view presented in \cref{sec:Vector_as_operator}, where vector fields 
were seen as operators on differential forms, the identification being done through the Lie derivative. Mimicking the definition in \cref{eqn:g_deff} we propose a first discrete representation of the vector fields as : 
\begin{equation}
\label{eqn:gh_deff}
\g_h = \{(A^0_h,...,A_h^n) \in L(V^0_h) \times ... \times L(V^n_h) : d A_h^{i-1} = A_h^i d \} ~ . 
\end{equation}
One should note that this space is very big and doing computations with objects in $\g_h$ would make our method uncompetitive compared to state of the art finite element/DG methods. However, this space will not be used in the final scheme: it is introduced only for the 
theoretical purpose of mimicking \cref{eqn:g_deff} at a discrete level. 

The last object that is missing to provide a discretization of \cref{eqn:MHD_reduced_lagrangian_operator} is a $\hat{\cdot}$ operator which maps a Lie derivative to a corresponding vector field.
To describe our method in the most general setting, we consider a general discrete 
vector field space that we denote $X_h \subset X(\Omega)$. 
We first need to describe how, from a discrete vector field $\uh \in X_h$, 
we can recover an operator $A_{\uh}$ in $\g_h$. 
To do so we make another classical assumption.
\begin{hypothesis}
\label{hyp:Pi}
We suppose that there exist projection operators $\Pi^i$ onto the discrete 
spaces $V_h^i$, $i\in \{0,...,n\}$, that preserve the vanishing of forms at the boundary $\partial \Omega$.
\end{hypothesis}
We next use Cartan's formula and define for every $i\in \{0,...,n\}$ and every $\alpha_h \in V_h^i$, 
\begin{equation}
\label{eqn:discrete_advection}
A^i_{\uh} \alpha_h = d\Pi^{i-1}(i_{\uh} \alpha_h) + \Pi^{i}(i_{\uh} d \alpha_h) ~ ,
\end{equation}
where we remind that $i_{\uh}$ is the (continuous) interior product associated with a vector field $\uh$. 
Notice that the commutation property 
\begin{equation} 
    \label{eqn:comm_dA}
    d A^i_{\uh} \alpha_h = A^{i-1}_{\uh} d \alpha_h
\end{equation}
holds for all $\alpha_h \in V_h^i$, as a direct consequence of \cref{hyp:DeRham,hyp:Pi}.
In particular, \eqref{eqn:comm_dA} does not require projection operators $\Pi$ that commute with the exterior derivative.

This discretization of the Lie derivative operators is a central point in our approach. 
It will allow us to propose a strong formulation for all the advection equations, 
in contrast to previous variational derivations~\cite{natale2018variational,gawlik2021structure,gawlik2021variational} 
where some advection equations are discretized in a weak form.
We now make the following hypothesis in order to build a $\hat{\cdot}$ map :
\begin{hypothesis}
\label{hyp:hat-map}
We suppose that the map $\uu \mapsto A_{\uu}$ mapping $X_h$ to $\g_h$ is injective, and that we can build a pseudo-inverse. 
This left-inverse will be denoted as a hat map, that is :
\begin{equation}
\label{eqn:requirement_hat}
\hat{\cdot} ~ : ~ g_h \rightarrow X_h ~ \text{ such that } \widehat{A_{\uh}}=\uh ~ \forall \uh \in X_h ~ .
\end{equation}
\end{hypothesis}
\subsection{Discrete variational principle and scheme}

Using the discrete forms, vector fields and hat map described above, we define our discrete version 
of the Lagrangian in \cref{eqn:MHD_reduced_lagrangian_operator} as 
\begin{equation}
    \label{eqn:MHD_reduced_lagrangian_operator_discrete}
    l_h(A_h,\varrho_h, \varsigma_h, \beta_h)=\int_\Omega \frac{1}{2}\varrho_h|\hat{A_h}|^2-\varrho_h e(\varrho_h, \varsigma_h) -\frac{1}{2}\beta_h \wedge \star \beta_h ~,
\end{equation}
for $A_h \in \g_h$, $\varrho_h, \varsigma_h \in V^n_h$ and $\beta_h \in V^{n-1}_h$.
We next consider a first variational problem. 

\begin{definition}[General discrete variational problem]

Find $A_h \in \g_h$, $\varrho_h, \varsigma_h \in V^n_h$ and $\beta_h \in V^{n-1}_h$ that extremize the action of the 
discrete Lagrangian \eqref{eqn:MHD_reduced_lagrangian_operator_discrete}  
under variations of the form 
\begin{equation}
\delta A_h = \partial_t C_h + [C_h, A_h] ~, 
\qquad 
\delta \varrho_h = - C_h^n \varrho_h  ~, 
\qquad 
\delta \varsigma_h = - C_h^n \varsigma_h  ~, 
\qquad
\delta \beta_h = - C_h^{n-1} \beta_h ~,
\end{equation}
where $C_h = C_h(t)$ is a general curve on $\g_h$ which is null at end-points. This variational problem is supplemented with the following advection equations : 
\begin{equation}
\partial_t \varrho_h + A_h^n\varrho_h  = 0 ~, 
\qquad
\partial_t \varsigma_h + A_h^n\varsigma_h = 0 ~, 
\qquad
\partial_t \beta_h + A_h^{n-1} \beta_h = 0 ~.
\end{equation}
\end{definition}
The equation resulting from this variational principle is:
\begin{equation}
\langle \partial_t \frac{\delta l_h}{\delta A_h}, C_h \rangle - \langle \frac{\delta l_h}{\delta A_h}, [C_h,A_h] \rangle + \langle \frac{\delta l_h}{\delta \varrho_h}, C_h^n \varrho_h \rangle + \langle \frac{\delta l_h}{\delta \varsigma_h}, C_h^n \varsigma_h \rangle + \langle \frac{\delta l_h}{\delta \beta_h}, C_h^{n-1} \beta_h \rangle = 0 
\end{equation}
for all $C_h \in \g_h$. We have $\langle \frac{\delta l_h}{\delta A_h}, C_h \rangle = \int_{\Omega} \varrho_h \hat{A_h} \cdot \hat{C_h}$ hence, for this equation to have a unique solution, one needs 
that the hat map be not only surjective but also injective. 
Therefore, we need to reduce the variational principle to a smaller space:
a simple choice is to use the image of $\uu_h \mapsto A_{\uu_h}$.
Thus, we set 
\begin{equation}
    \Delta_h = \{A_{\uu_h} \in \g_h : \uu_h \in X_h \}    
\end{equation}
and consider the restricted variational problem 
where $A_h$ is sought for in $\Delta_h$, with variations $C_h$ also in $\Delta_h$. This leads us to the following problem.

\begin{definition}[Restricted discrete variational problem]
\label{def:disc_var_prob}
Find $A_h \in \Delta_h$, $\varrho_h, \varsigma_h \in V^n_h$ and $\beta_h \in V^{n-1}_h$ that extremizes the action of the Lagrangian \eqref{eqn:MHD_reduced_lagrangian_operator_discrete} under variations of the form 
\begin{equation}
\delta A_h = \partial_t C_h + [C_h, A_h] ~, 
\qquad 
\delta \varrho_h = - C_h^n \varrho_h  ~, 
\qquad 
\delta \varsigma_h = - C_h^n \varsigma_h  ~, 
\qquad
\delta \beta_h = - C_h^{n-1} \beta_h ~,
\end{equation}
where $C_h = C_h(t)$ is a general curve on $\Delta_h$ which is null at end-points, together with the advection equations:  
\begin{equation}
\partial_t \varrho_h + A_h^n\varrho_h  = 0 ~, 
\qquad
\partial_t \varsigma_h + A_h^n\varsigma_h = 0 ~, 
\qquad
\partial_t \beta_h + A_h^{n-1} \beta_h = 0 ~.
\end{equation}
\end{definition}
\begin{remark}
The space $\Delta_h$ is in general not stable by the bracket operation. 
The restriction to a space not stable by the bracket is called a 
``non-holonomic constraint'', see~\cite{gawlik2011geometric, pavlov2011structure, gawlik2021variational}.
\end{remark}

\begin{proposition}
\label{prop:explicit_momentum}
Let $A_h \in \Delta_h$ be the discrete Lie derivative operator 
in the solution to the Problem given in Def.~\ref{def:disc_var_prob}. 
Then $A_h = A_{\uu_h}$ for some $\uu_h \in X_h$ which satisfies :
\begin{equation}
\label{eqn:momentum_eq_Xh_1}
\begin{aligned}
& \int_\Omega \partial_t (\varrho_h \uu_h) \cdot \vv_h - (\varrho_h \uu_h) \cdot \widehat{[A_{\vv_h},A_{\uu_h}]}  
  + \Big(\frac{1}{2}|\uu_h|^2 - e(\varrho_h, \varsigma_h) - \varrho_h \partial_{\varrho_h} e(\varrho_h, \varsigma_h) \Big) A_{\vv_h}^{n} \varrho_h 
  \\
  	&\mspace{260mu} 
  - \varrho_h \partial_{\varsigma_h}e(\varrho_h, \varsigma_h)  A_{\vv_h}^{n} \varsigma_h - \beta_h \wedge \star A_{\vv_h}^{n-1} \beta_h = 0 ~ \qquad \forall \vv_h \in X_h ~.
\end{aligned}
\end{equation}
\end{proposition}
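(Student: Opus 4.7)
The plan is to derive the Euler-Lagrange equation associated with the discrete Lagrangian $l_h$ under the prescribed constrained variations, and to rewrite it as an equation posed on $X_h$ via the hat map. Since the solution satisfies $A_h \in \Delta_h$, the injectivity assumption in \cref{hyp:hat-map} produces a unique $\uu_h \in X_h$ with $A_h = A_{\uu_h}$, and likewise each test curve can be written as $C_h = A_{\vv_h}$ with $\vv_h = \widehat{C_h} \in X_h$ vanishing at both end-points.

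The next step is to compute the four functional derivatives of $l_h$. Using linearity of the hat map on $\Delta_h$, the kinetic part gives $\langle \delta l_h / \delta A_h, \delta A_h\rangle = \int_\Omega \varrho_h \uu_h \cdot \widehat{\delta A_h}$; note that the full variation $\delta A_h = \partial_t A_{\vv_h} + [A_{\vv_h}, A_{\uu_h}]$ lies in $\g_h$ but generally escapes $\Delta_h$, which is harmless since the pseudo-inverse hat map is defined on all of $\g_h$. Standard computations on the internal energy and magnetic terms yield $\delta l_h / \delta \varrho_h = \frac{1}{2}|\uu_h|^2 - e(\varrho_h,\varsigma_h) - \varrho_h \partial_{\varrho_h} e$ and $\delta l_h / \delta \varsigma_h = -\varrho_h \partial_{\varsigma_h} e$, while $\langle \delta l_h / \delta \beta_h, \delta \beta_h\rangle = -\int_\Omega \beta_h \wedge \star\, \delta \beta_h$.

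Substituting the constrained variations $\delta \varrho_h = -A_{\vv_h}^n \varrho_h$, $\delta \varsigma_h = -A_{\vv_h}^n \varsigma_h$, $\delta \beta_h = -A_{\vv_h}^{n-1} \beta_h$ into the stationarity condition, and integrating the $\partial_t A_{\vv_h}$ contribution by parts in time (using $\widehat{\partial_t C_h} = \partial_t \vv_h$ together with the end-point vanishing of $\vv_h$), the four terms assemble into a single time integral whose integrand at each $t$ is linear in $\vv_h$. Arbitrariness of $\vv_h \in X_h$, combined with the possibility of localising it in time, then yields the pointwise-in-time identity \eqref{eqn:momentum_eq_Xh_1}; a quick accounting of signs shows that the minus sign from the IBP combined with the minus signs from the $\varrho_h$, $\varsigma_h$, $\beta_h$ variations reproduces exactly the signs in \eqref{eqn:momentum_eq_Xh_1}.

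The main technical subtlety is the non-holonomic character of the constraint $A_h \in \Delta_h$ flagged in the preceding remark: the variation $\partial_t C_h + [C_h, A_h]$ does not lie in $\Delta_h$, so the derivative of the kinetic energy must be interpreted through the pseudo-inverse hat map on $\g_h$, and one must verify that no additional constraint-enforcing terms appear beyond those inherited from the Euler-Poincaré structure. Once this is in place the remainder is a routine exercise in signs and time integration by parts.
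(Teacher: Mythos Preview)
Your proposal is correct and follows essentially the same route as the paper: compute the functional derivatives of $l_h$, insert the constrained variations, integrate by parts in time, and use the identification $A_h = A_{\uu_h}$, $C_h = A_{\vv_h}$ together with $\widehat{A_{\uu_h}} = \uu_h$ to land on \eqref{eqn:momentum_eq_Xh_1}. If anything, you are more explicit than the paper about the time-IBP step and about the fact that the bracket term $[A_{\vv_h},A_{\uu_h}]$ leaves $\Delta_h$ and must be handled via the hat map defined on all of $\g_h$; the paper's proof simply writes $\widehat{[C_h,A_h]}$ without comment on this point.
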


\begin{proof}
Writing that our solution is an extremum of the action of 
\eqref{eqn:MHD_reduced_lagrangian_operator_discrete} is equivalent to 
\begin{equation*}
\langle \partial_t \frac{\delta l_h}{\delta A_h}, C_h \rangle - \langle \frac{\delta l_h}{\delta A_h}, [C_h,A_h] \rangle + \langle \frac{\delta l_h}{\delta \varrho_h}, C_h^{n} \varrho_h \rangle + \langle \frac{\delta l_h}{\delta \varsigma_h}, C_h^n \varsigma_h \rangle + \langle \frac{\delta l_h}{\delta \beta_h}, C_h^{n-1} \beta_h \rangle = 0 
~ \quad \forall C_h \in \Delta_h ~.
\end{equation*}
We can now rewrite this equation by expliciting all the terms : 
\begin{align*}
&\int_\Omega \partial_t (\varrho_h \hat{A_h}) \cdot \hat{C_h} - (\varrho_h \hat{A_h}) \cdot \widehat{[C_h,A_h]}  + \Big(\frac{1}{2}|\hat{A_h}|^2 - e(\varrho_h, \varsigma_h) - \varrho_h \partial_{\varrho_h} e(\varrho_h, \varsigma_h) \Big) C_h^{n} \varrho_h   \\
&\mspace{260mu}	 
-\varrho_h \partial_{\varsigma_h}e(\varrho_h, \varsigma_h)  C_h^{n} \varsigma_h - \beta_h \wedge \star C_h^{n-1} \beta_h  = 0 ~ \qquad \forall C_h \in \Delta_h ~.
\end{align*}
As all the operators in $\Delta_h$ are images of vector fields in $X_h$, $A_h = A_{\uu_h}$ for some $\uu_h \in X_h$ and $C_h = A_{\vv_h}$ for some $\vv_h \in X_h$. Using the relation $\widehat{A_{\uh}}=\uh$ we can rewrite this equation on $\uu_h$ and $\vv_h$ and directly find \cref{eqn:momentum_eq_Xh_1}
\end{proof}

Putting together this momentum equation and the advection equations, we arrive at the following result 
which defines our variational scheme.

\begin{proposition}[semi-discrete scheme]

Solutions to Problem \cref{def:disc_var_prob} satisfy the following equations 
(where we have identified $\widehat{A_{\uu_h}}$ with $\uu_h$) :
\begin{subequations}
\label{eqn:discrete_Xh}
\begin{equation}
\label{eqn:momentum_eq_Xh}
\begin{aligned}
&\int_\Omega \partial_t (\varrho_h \uu_h) \cdot \vv_h - (\varrho_h \uu_h) \cdot \widehat{[A_{\vv_h},A_{\uu_h}]}  + \Big(\frac{1}{2}|\uu_h|^2 - e(\varrho_h, \varsigma_h) - \varrho_h \partial_{\varrho_h} e(\varrho_h, \varsigma_h) \Big) A_{\vv_h}^{n} \varrho_h \\
&\mspace{260mu}	 
-	 \varrho_h \partial_{\varsigma_h}e(\varrho_h, \varsigma_h)  A_{\vv_h}^{n} \varsigma_h 
  - \beta_h \wedge \star A_{\vv_h}^{n-1} \beta_h  = 0 ~ \qquad \forall \vv_h \in X_h ~,
\end{aligned}
\end{equation}
\begin{equation}
\label{eqn:advection_eq_Xh_rho}
\partial_t \varrho_h + A_{\uu_h}^n\varrho_h  = 0 ~, 
\end{equation}
\begin{equation}
\label{eqn:advection_eq_Xh_s}
\partial_t \varsigma_h + A_{\uu_h}^n\varsigma_h = 0 ~,
\end{equation}
\begin{equation}
\label{eqn:advection_eq_Xh_B}
\partial_t \beta_h + A_{\uu_h}^{n-1} \beta_h = 0 ~.
\end{equation}
\end{subequations}
These equations constitute the semi-discrete scheme that we consider in the remaining of this article.
\end{proposition}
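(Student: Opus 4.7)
The proposition is really a bookkeeping statement that assembles the Euler--Lagrange equation of the reduced action with the prescribed advection equations, so my plan would be quite short and would lean heavily on what has already been proved.

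First, I would dispatch the three advection equations \cref{eqn:advection_eq_Xh_rho}, \cref{eqn:advection_eq_Xh_s} and \cref{eqn:advection_eq_Xh_B}. These are built directly into the statement of the restricted variational problem in Definition~\ref{def:disc_var_prob} as constraints that every admissible trajectory must satisfy, so after observing that any solution has $A_h = A_{\uu_h}$ for some $\uu_h \in X_h$ (a consequence of the restriction $A_h \in \Delta_h$ together with \cref{hyp:hat-map}), the three transport identities are copies of the constraints already present in the definition. No computation is needed here.

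Second, for the momentum equation \cref{eqn:momentum_eq_Xh}, I would simply invoke \cref{prop:explicit_momentum}: it asserts that the Lie operator $A_h$ of any critical point of the action of \cref{eqn:MHD_reduced_lagrangian_operator_discrete} (with variations restricted to $\Delta_h$) is of the form $A_{\uu_h}$ for some $\uu_h \in X_h$ satisfying \cref{eqn:momentum_eq_Xh_1}. Since \cref{eqn:momentum_eq_Xh_1} and \cref{eqn:momentum_eq_Xh} are literally the same relation, this closes the derivation. The identification of the hat of $A_{\uu_h}$ with $\uu_h$ itself, which is what allows the statement to be written in terms of $\uu_h$ alone rather than $\widehat{A_{\uu_h}}$, is justified by \cref{hyp:hat-map}, which provides a left inverse to $\uu \mapsto A_{\uu}$.

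Third, the only point I would take some care with, though I do not expect it to be a genuine obstacle, is the matching of test spaces: the variations $C_h$ in Definition~\ref{def:disc_var_prob} range over $\Delta_h$, whereas the reduced test fields $\vv_h$ in \cref{eqn:momentum_eq_Xh} range over $X_h$. By \cref{hyp:hat-map}, however, $\uu_h \mapsto A_{\uu_h}$ is a bijection between $X_h$ and $\Delta_h$, so the quantification $\forall C_h \in \Delta_h$ is equivalent to the quantification $\forall \vv_h \in X_h$ via $C_h = A_{\vv_h}$. With this identification in hand, the semi-discrete system \cref{eqn:discrete_Xh} is assembled, and the proposition follows as an immediate consequence of \cref{prop:explicit_momentum} together with \cref{def:disc_var_prob}.
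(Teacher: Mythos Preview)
Your proposal is correct and follows exactly the same approach as the paper: invoke \cref{prop:explicit_momentum} for the momentum equation and read off the advection equations directly from \cref{def:disc_var_prob} once $A_h = A_{\uu_h}$ is identified. Your extra remark on the bijection between $X_h$ and $\Delta_h$ is a slight elaboration beyond what the paper writes but is harmless and correct.
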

\begin{proof}
The momentum equation is \cref{eqn:momentum_eq_Xh_1}: it was given in \cref{prop:explicit_momentum}. 
The advection equations follow directly from \cref{def:disc_var_prob}, using that $A_h = A_{\uu_h}$.
\end{proof}

Again let us point out that, as a result of our strong discretization of the Lie advection operators, only the momentum equation is weak here. In particular, all the discrete transport equations hold in a strong sense. 

Later we will specify one choice of discretization spaces and operators that is used in the numerical results presented in this article. 
In particular we refer the reader to \cref{sec:Xh_map_pi} for the choice of the projection $\Pi$ and the hat map $\hat{\cdot}$, to \cref{sec:FEM_proxy} for a reformulation of the scheme in more standard terms and to \cref{sec:discrete_Vh} for a particular choice of discrete FEEC spaces.
But first, we establish some conservation laws that our discrete framework guarantees.

\subsection{Conservation properties of the general (semi-discrete) scheme}

\begin{proposition}[Conservation of mass]
\label{prop:mass_conv}
Discrete solutions to \cref{eqn:discrete_Xh}, under \cref{hyp:DeRham,hyp:Pi,hyp:hat-map}, preserve the total mass $\int_\Omega \varrho_h$.
\end{proposition}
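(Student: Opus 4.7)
The plan is to differentiate the total mass in time and use the density advection equation \eqref{eqn:advection_eq_Xh_rho} to reduce the claim to showing that $\int_\Omega A_{\uu_h}^n \varrho_h = 0$. Since $\varrho_h \in V_h^n$ is a top-degree form on an $n$-dimensional domain, its exterior derivative vanishes identically, so the defining formula \eqref{eqn:discrete_advection} collapses to
\begin{equation*}
A_{\uu_h}^n \varrho_h = d\, \Pi^{n-1}(i_{\uu_h}\varrho_h).
\end{equation*}

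Next I would apply Stokes' theorem to rewrite
\begin{equation*}
\int_\Omega A_{\uu_h}^n \varrho_h = \int_\Omega d\,\Pi^{n-1}(i_{\uu_h}\varrho_h) = \int_{\partial\Omega} \Pi^{n-1}(i_{\uu_h}\varrho_h),
\end{equation*}
and then use the boundary behaviour of $i_{\uu_h}\varrho_h$: because $\uu_h \in X_h \subset X(\Omega)$ is tangent to $\partial\Omega$, the contraction $i_{\uu_h}\varrho_h$ vanishes on $\partial\Omega$. \Cref{hyp:Pi} ensures that $\Pi^{n-1}$ preserves this vanishing at the boundary, so the boundary integral is zero. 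Combining these observations gives $\frac{d}{dt}\int_\Omega \varrho_h = -\int_\Omega A_{\uu_h}^n\varrho_h = 0$, which is the desired conservation.

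The only subtle point is the boundary vanishing argument: it is essential that the tangency of $\uu_h$ to $\partial\Omega$ be propagated through $\Pi^{n-1}$, which is exactly what \cref{hyp:Pi} encodes. Apart from that, the result is a direct consequence of the strong Cartan-type formula \eqref{eqn:discrete_advection} combined with the observation that $d\varrho_h = 0$ trivially for top forms; no further commutation property of the projections is needed.
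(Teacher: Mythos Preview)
Your proof is correct and follows essentially the same argument as the paper: differentiate the total mass, use the advection equation and $d\varrho_h=0$ to reduce $A_{\uu_h}^n\varrho_h$ to an exact form, then apply Stokes' theorem together with the tangency of $\uu_h$ and \cref{hyp:Pi} to kill the boundary term.
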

\begin{proof}
We use the advection 
\cref{eqn:advection_eq_Xh_rho} for $\varrho_h$, noting that 
$d\varrho_h = 0$ for an $n$-form: this gives
$$
\frac{\rmd}{\rmd t} \int_\Omega \varrho_h = \int_\Omega \partial_t \varrho_h 
= -\int_\Omega A^n_{\uu_h} \varrho_h 
= -\int_\Omega d(\Pi^{n-1} (i_{\uu_h} \varrho_h)) 
= -\int_{\partial \Omega} \Pi^{n-1} (i_{\uu_h} \varrho_h) = 0
$$
where we haved used that $\uu_h$ is tangent to the boundary, thus the flux of $i_{\uu_h} \varrho_h$ vanishes
on $\partial \Omega$, which is preserved by $\Pi^{n-1}$ by assumption.
\end{proof}

\begin{proposition}[Conservation of entropy]
\label{prop:entrop_conv}
Discrete solutions to \cref{eqn:discrete_Xh}, under \cref{hyp:DeRham,hyp:Pi,hyp:hat-map}, preserve the total entropy $\int_\Omega \varsigma_h$.
\end{proposition}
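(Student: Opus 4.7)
The plan is to proceed exactly in parallel with the proof of mass conservation in \cref{prop:mass_conv}, since the discrete entropy $\varsigma_h$ belongs to the same space $V_h^n$ as $\varrho_h$ and satisfies the structurally identical advection equation \eqref{eqn:advection_eq_Xh_s}. The only ingredient that is specific to $\varrho_h$ in the previous proof is that it is an $n$-form, and this is equally true of $\varsigma_h$.

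First, I would differentiate $\int_\Omega \varsigma_h$ in time and substitute the advection equation to get $\frac{\rmd}{\rmd t}\int_\Omega \varsigma_h = -\int_\Omega A^n_{\uu_h}\varsigma_h$. Next, I would expand $A^n_{\uu_h}\varsigma_h$ via Cartan's formula \eqref{eqn:discrete_advection}, namely $A^n_{\uu_h}\varsigma_h = d\Pi^{n-1}(i_{\uu_h}\varsigma_h) + \Pi^n(i_{\uu_h} d\varsigma_h)$, and observe that the second summand vanishes because $\varsigma_h \in V_h^n = \Lambda^n$ implies $d\varsigma_h = 0$. Therefore $A^n_{\uu_h}\varsigma_h = d\Pi^{n-1}(i_{\uu_h}\varsigma_h)$, which is an exact $n$-form.

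The final step is to apply Stokes' theorem, reducing the integral to $\int_{\partial\Omega}\Pi^{n-1}(i_{\uu_h}\varsigma_h)$. Since $\uu_h \in X_h \subset X(\Omega)$ is tangent to $\partial\Omega$, the interior product $i_{\uu_h}\varsigma_h$ has vanishing trace on the boundary; by \cref{hyp:Pi} this boundary-vanishing is preserved by $\Pi^{n-1}$, so the boundary integral is zero and $\int_\Omega \varsigma_h$ is conserved.

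I do not anticipate a real obstacle here: the argument is a verbatim transcription of \cref{prop:mass_conv} with $\varrho_h$ replaced by $\varsigma_h$, and relies on the same three hypotheses (the de Rham property, the existence of boundary-preserving projections, and the hat map construction underlying the definition of $A^n_{\uu_h}$). The only tiny care needed is to note explicitly that $\varsigma_h$, like $\varrho_h$, is a top-degree form so the $\Pi^n(i_{\uu_h}d\varsigma_h)$ term automatically drops.
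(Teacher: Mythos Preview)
Your proposal is correct and follows exactly the approach indicated in the paper, which simply states that the proof is similar to the mass conservation argument of \cref{prop:mass_conv}, using \cref{eqn:advection_eq_Xh_s} in place of \cref{eqn:advection_eq_Xh_rho}. You have written out precisely this parallel computation, so there is nothing to add.
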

\begin{proof}
The proof is similar to the previous one on mass preservation, using \cref{eqn:advection_eq_Xh_s}.
\end{proof}

\begin{proposition}[Preservation of solenoidal character of $\BB_h$]
\label{prop:divB_conv}
In addition to \cref{hyp:DeRham,hyp:Pi,hyp:hat-map}, assume that $d \beta_h(t=0) = 0$. 
Then discrete solutions to \cref{eqn:discrete_Xh} satisfy $d \beta_h = 0$ (that is, $\Div \BB_h = 0$) for all $t$.  
\end{proposition}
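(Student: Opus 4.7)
The plan is to show that $d\beta_h$ satisfies a linear evolution ODE in the finite-dimensional space $V_h^n$ with zero initial data, and hence vanishes for all time.

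First, I would apply the exterior derivative to the transport equation \eqref{eqn:advection_eq_Xh_B}. Since $\partial_t$ commutes with $d$ (which does not depend on $t$), and since $\beta_h \in V_h^{n-1}$ implies $d\beta_h \in V_h^n$ by \cref{hyp:DeRham}, I obtain
\begin{equation*}
\partial_t (d\beta_h) = -d(A_{\uu_h}^{n-1} \beta_h).
\end{equation*}

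Next, I would invoke the commutation property \eqref{eqn:comm_dA} relating the discrete Lie derivative operators with the exterior derivative. This property is a direct consequence of \cref{hyp:DeRham,hyp:Pi} together with the Cartan-style definition \eqref{eqn:discrete_advection}: indeed, applying $d$ to \eqref{eqn:discrete_advection} kills the first term thanks to $d \circ d = 0$, and the remaining term $d\Pi^{n-1}(i_{\uu_h}d\beta_h)$ is exactly $A_{\uu_h}^n(d\beta_h)$ since $d(d\beta_h)=0$. Thus $d A_{\uu_h}^{n-1}\beta_h = A_{\uu_h}^n (d\beta_h)$, and the equation for $\omega_h := d\beta_h \in V_h^n$ reads
\begin{equation*}
\partial_t \omega_h + A_{\uu_h}^n \omega_h = 0.
\end{equation*}

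Finally, I would conclude by uniqueness: this is a linear, homogeneous, first-order ODE in the finite-dimensional space $V_h^n$ (with the time-dependent operator $A^n_{\uu_h(t)}$), so the Cauchy problem with data $\omega_h(0) = d\beta_h(t=0) = 0$ has the unique solution $\omega_h(t) \equiv 0$. Translating back via the vector proxy correspondence of \cref{tab:proxy}, this is precisely $\Div \BB_h = 0$ for all times.

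There is no real obstacle here: the proof is essentially a one-line consequence of the strong, commuting nature of our discrete Lie derivatives. The crucial design choice making it work is the use of Cartan's formula \eqref{eqn:discrete_advection} to define $A_{\uu_h}^i$, which ensures the commutation $d A^i = A^{i+1} d$ without requiring the projections $\Pi^i$ to commute with $d$.
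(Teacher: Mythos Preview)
Your proof is correct and follows essentially the same route as the paper: apply $d$ to \eqref{eqn:advection_eq_Xh_B}, use $d\circ d=0$ on the Cartan formula \eqref{eqn:discrete_advection} to eliminate the first term, and conclude that $d\beta_h$ stays zero. The only cosmetic difference is that the paper phrases the last step as an invariance argument (if $d\beta_h(t)=0$ then $\partial_t d\beta_h(t)=0$), whereas you write the full linear ODE $\partial_t\omega_h + A^n_{\uu_h}\omega_h = 0$ and invoke uniqueness; your formulation is arguably a touch cleaner but the content is identical.
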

\begin{proof}
Assuming $d\beta_h(t) = 0$, we show that $\partial_t d \beta_h(t) = 0$, which implies that $d \beta_h =0 $ at all $t$: we have
\begin{equation*}
    \partial_t d \beta_h = d \partial_t \beta_h
    = d(d \Pi^{n-2}(i_{\uu_h} \beta_h) + \Pi^{n-1}(i_{\uu_h}(d\beta_h))
\end{equation*}
where we have used \cref{eqn:advection_eq_Xh_B} and \cref{eqn:discrete_advection}
The first term vanishes because $d \circ d=0$ and the second one because we assumed $d \beta_h(t) =0$.
\end{proof}

\begin{proposition}[Preservation of energy]
Discrete solutions to \cref{eqn:discrete_Xh}, under \cref{hyp:DeRham,hyp:Pi,hyp:hat-map}, preserve the energy 
\begin{equation}
\label{eqn:energy}
E = \int_\Omega \varrho_h \frac{|\uu_h|^2}{2} + \varrho_h e(\varrho_h, \varsigma_h) + \frac{|\beta_h|^2}{2}
\end{equation}
where we have denoted $|\beta|^2 = \beta_h \wedge \star \beta_h$.
\end{proposition}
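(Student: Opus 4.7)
The plan is to differentiate the energy $E$ in time, expand using the product rule, substitute the advection equations for the time derivatives of $\varrho_h,\varsigma_h,\beta_h$, and then recognize the resulting expression as the momentum equation \eqref{eqn:momentum_eq_Xh} tested with $\vv_h = \uu_h$. Since $\uu_h \in X_h$, it is an admissible test function, and since $[A_{\uu_h},A_{\uu_h}]=0$ the nonlinear bracket term vanishes when we make this choice.

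First I would compute each term of $\frac{\mathrm{d} E}{\mathrm{d} t}$ separately. Using the product rule one obtains
$$
\partial_t\Big(\varrho_h \tfrac{|\uu_h|^2}{2}\Big) = \partial_t(\varrho_h \uu_h)\cdot \uu_h - \tfrac{1}{2}|\uu_h|^2 \partial_t \varrho_h,
$$
$$
\partial_t\bigl(\varrho_h\, e(\varrho_h,\varsigma_h)\bigr) = \bigl(e(\varrho_h,\varsigma_h) + \varrho_h \partial_{\varrho_h} e\bigr)\partial_t \varrho_h + \varrho_h \partial_{\varsigma_h} e\, \partial_t \varsigma_h,
$$
and, by symmetry of the Hodge inner product, $\partial_t\bigl(\tfrac{1}{2} \beta_h \wedge \star \beta_h\bigr) = \beta_h \wedge \star \partial_t \beta_h$. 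Summing these, $\frac{\mathrm{d} E}{\mathrm{d} t}$ becomes the integral over $\Omega$ of
$$
\partial_t(\varrho_h \uu_h)\cdot \uu_h + \Big(-\tfrac{1}{2}|\uu_h|^2 + e(\varrho_h,\varsigma_h) + \varrho_h \partial_{\varrho_h} e\Big)\partial_t \varrho_h + \varrho_h \partial_{\varsigma_h} e\, \partial_t \varsigma_h + \beta_h \wedge \star\, \partial_t \beta_h.
$$

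Second, I would substitute the advection equations \eqref{eqn:advection_eq_Xh_rho}--\eqref{eqn:advection_eq_Xh_B}, which give $\partial_t \varrho_h = -A_{\uu_h}^n \varrho_h$, $\partial_t \varsigma_h = -A_{\uu_h}^n \varsigma_h$, and $\partial_t \beta_h = -A_{\uu_h}^{n-1}\beta_h$. The expression for $\frac{\mathrm{d} E}{\mathrm{d} t}$ then becomes the integral over $\Omega$ of
$$
\partial_t(\varrho_h \uu_h)\cdot \uu_h - \Big(\tfrac{1}{2}|\uu_h|^2 - e(\varrho_h,\varsigma_h) - \varrho_h \partial_{\varrho_h} e\Big) A_{\uu_h}^n \varrho_h - \varrho_h \partial_{\varsigma_h} e\, A_{\uu_h}^n \varsigma_h - \beta_h \wedge \star\, A_{\uu_h}^{n-1}\beta_h,
$$
which, after noting that testing \eqref{eqn:momentum_eq_Xh} with $\vv_h = \uu_h$ kills the bracket term (since $[A_{\uu_h},A_{\uu_h}]=0$), is exactly the left-hand side of the momentum equation with $\vv_h=\uu_h$. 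Hence $\frac{\mathrm{d} E}{\mathrm{d} t} = 0$, which concludes the proof.

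The proof is essentially a direct calculation; there is no genuine obstacle once one has the strong form of the advection equations and the discrete momentum equation with an arbitrary test function $\vv_h\in X_h$. The only point worth emphasizing is the key role of the non-holonomic restriction to $\Delta_h$: because this restricted space is the image of $X_h$ under $\uu_h \mapsto A_{\uu_h}$, one may legitimately pick $\vv_h=\uu_h$ as a test function, which is what makes the antisymmetric bracket drop out and produces an exact cancellation between the kinetic, internal, and magnetic contributions without any boundary terms or numerical dissipation.
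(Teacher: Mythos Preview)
Your proof is correct and follows essentially the same route as the paper's: rewrite the kinetic term via $\partial_t(\varrho_h|\uu_h|^2/2)=\partial_t(\varrho_h\uu_h)\cdot\uu_h-\tfrac12|\uu_h|^2\partial_t\varrho_h$, substitute the advection equations, and identify the result with the momentum equation tested against $\vv_h=\uu_h$, where the bracket term vanishes by antisymmetry. (Minor slip: in your displayed expression after substitution the sign in front of the parenthesis $\big(\tfrac12|\uu_h|^2-e-\varrho_h\partial_{\varrho_h}e\big)A_{\uu_h}^n\varrho_h$ should be $+$, not $-$, to match \eqref{eqn:momentum_eq_Xh}; the logic is unaffected.)
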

\begin{proof}
First we compute : 
\begin{align*}
\partial_t (\varrho_h \frac{|\uu_h|^2}{2}) &= \frac{1}{2}\partial_t(\varrho_h \uu_h) \cdot \uu_h + \frac{1}{2}(\varrho_h \uu_h) \partial_t \uu_h \\
&= \partial_t(\varrho_h \uu_h) \cdot \uu_h-\frac{1}{2}\partial_t(\varrho_h \uu_h) \cdot \uu_h + \frac{1}{2}(\varrho_h \uu_h) \partial_t \uu_h \\
&= \partial_t(\varrho_h \uu_h) \cdot \uu_h-\frac{1}{2}\partial_t(\varrho_h) \cdot |\uu_h|^2 - \frac{1}{2}(\varrho \uu_h) \partial_t \uu_h + \frac{1}{2}(\varrho \uu_h) \partial_t \uu_h \\
&= \partial_t(\varrho_h \uu_h) \cdot \uu_h-\frac{1}{2}\partial_t(\varrho_h) \cdot |\uu_h|^2 ~.
\end{align*}
Adding further the other terms, we have
\begin{align*}
\partial_t E &= \int_\Omega \partial_t(\varrho_h \uu_h) \cdot \uu_h-\frac{1}{2}\partial_t(\varrho_h) \cdot |\uu_h|^2 +  \Big( e(\varrho_h, \varsigma_h) + \varrho_h \partial_{\varrho_h} e(\varrho_h, \varsigma_h) \Big) \partial_t \varrho_h + \varrho_h \partial_{\varsigma_h}e(\varrho_h, \varsigma_h)  \partial_t \varsigma_h + \beta_h \wedge \star \partial_t \beta_h \\
&= \int_\Omega \partial_t(\varrho_h \uu_h) \cdot \uu_h +\Big(\frac{1}{2}\partial_t(\varrho_h) \cdot |\uu_h|^2 - e(\varrho_h, \varsigma_h) - \varrho_h \partial_{\varrho_h} e(\varrho_h, \varsigma_h) \Big) A^n_{\uu_h} \varrho_h - \varrho_h \partial_{\varsigma_h}e(\varrho_h, \varsigma_h) A^n_{\uu_h} \varsigma_h - \beta_h \wedge \star A^{n-1}_{\uu_h} \beta_h ~,
\end{align*}
where we have used the advection equations \cref{eqn:advection_eq_Xh_rho,eqn:advection_eq_Xh_s,eqn:advection_eq_Xh_B}.
Using next that $[A_{\uu_h}, A_{\uu_h}] = 0$, this is equal to 
\begin{align*}
\partial_t E &= \int_\Omega \partial_t(\varrho_h \uu_h) \cdot \uu_h - \partial_t(\varrho_h \uu_h) \cdot \widehat{[A_{\uu_h}, A_{\uu_h}]} +\Big(\frac{1}{2}\partial_t(\varrho_h) \cdot |\uu_h|^2 - e(\varrho_h, \varsigma_h) - \varrho_h \partial_{\varrho_h} e(\varrho_h, \varsigma_h) \Big) A^n_{\uu_h} \varrho_h \\
    & \mspace{200mu} 
    - \varrho_h \partial_{\varsigma_h}e(\varrho_h, \varsigma_h) A^n_{\uu_h} \varsigma_h - \beta_h \wedge \star A^{n-1}_{\uu_h} \beta_h 
    \\&=0 ~,
\end{align*}
using \cref{eqn:momentum_eq_Xh} with $\vv_h = \uu_h$: this completes the proof.
\end{proof}

\subsection{Time discretization}
\label{sec:time_discretization}
We now propose a time discretization of \cref{eqn:discrete_Xh} that preserves at the fully discrete level the invariants 
mentioned in the previous section. Here we suppose that the time step $\Delta t$ is constant, 
and for any discrete variable we denote $a^k = a(\cdot, k \Delta t)$ and $a^{k+\frac{1}{2}}=\frac{a^{k+1}+a^k}{2}$ for conciseness.
Again, we assume that \cref{hyp:DeRham,hyp:Pi,hyp:hat-map} hold throughout this section.

Our time discretization reads :
\begin{subequations}
\label{eqn:FEMt_full}
\begin{multline}
        \label{eqn:FEMt_full_MHD_mom}
    \int_\Omega  \frac{\varrho_h^{k+1} \uu_h^{k+1} - \varrho_h^k \uu_h^k}{\Delta t} \cdot \vv_h - \varrho_h^{k + \frac{1}{2}} \uu^{k + \frac{1}{2}}_h \cdot \widehat{[A_{\vv_h},A_{\uu_h^{k+\frac{1}{2}}}]}
    \\
    + (\frac{\uu_h^k \cdot \uu_h^{k+1}}{2} - \frac{1}{2}(\frac{\varrho_h^{k+1}e(\varrho_h^{k+1},\varsigma_h^{k+1})-\varrho_h^{k}e(\varrho_h^{k},\varsigma_h^{k+1})}{\varrho_h^{k+1}-\varrho_h^k} + \frac{\varrho_h^{k+1}e(\varrho_h^{k+1},\varsigma_h^{k})-\varrho_h^{k}e(\varrho_h^{k},\varsigma_h^{k})}{\varrho_h^{k+1}-\varrho_h^k})) A_{\vv_h} \varrho_h^{k+\frac{1}{2}} \\
    - \frac{1}{2}(\frac{\varrho_h^{k+1}e(\varrho_h^{k+1},\varsigma_h^{k+1})-\varrho_h^{k+1}e(\varrho_h^{k+1},\varsigma_h^{k})}{\varsigma_h^{k+1}-\varsigma_h^k}+\frac{\varrho_h^{k}e(\varrho_h^{k},\varsigma_h^{k+1})-\varrho_h^{k}e(\varrho_h^{k},\varsigma_h^{k})}{\varsigma_h^{k+1}-\varsigma_h^k}) A_{\vv_h} \varsigma_h^{k+\frac{1}{2}} \\
    - \beta_h^{k+\frac{1}{2}} \wedge \star A^{n-1}_{\vv_h} \beta_h^{k+\frac{1}{2}} = 0 ~ \quad \forall \vv_h \in X_h ~,  
\end{multline}
\begin{equation}
    \label{eqn:FEMt_full_MHD_dens}
    \frac{\varrho_h^{k+1} - \varrho_h^k }{\Delta t} + A_{\uu_h^{k+\frac{1}{2}}}^n \varrho_h^{k+\frac{1}{2}} = 0 ~,
\end{equation}
\begin{equation}
    \label{eqn:FEMt_full_MHD_entrop}
    \frac{\varsigma_h^{k+1} - \varsigma_h^k }{\Delta t} + A_{\uu_h^{k+\frac{1}{2}}}^n \varsigma_h^{k+\frac{1}{2}} = 0 ~,
\end{equation}
\begin{equation}
    \label{eqn:FEMt_full_MHD_B}
    \frac{\beta_h^{k+1} - \beta_h^k }{\Delta t} + A_{\uu_h^{k+\frac{1}{2}}}^{n-1} \beta^{k+\frac{1}{2}} = 0 ~.
\end{equation}
\end{subequations}
\begin{proposition}
The scheme \eqref{eqn:FEMt_full} preserves the energy 
$E^k = \int_\Omega \frac{1}{2}\varrho^k_h|\uu_h^k|^2+\varrho^k_h e(\varrho^k_h,\varsigma^k_h)+\frac{1}{2}|\beta^k_h|^2$.
\end{proposition}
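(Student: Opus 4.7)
The plan is to mirror the semi-discrete energy proof, replacing time derivatives by differences and continuous product/chain rules by their discrete analogs (summation-by-parts style identities), then close the argument by testing the discrete momentum equation against $\vv_h = \uu_h^{k+\frac{1}{2}}$.

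The starting point is the energy difference $E^{k+1}-E^k$, split into a kinetic, an internal and a magnetic contribution. For the kinetic part I would use the algebraic identity
\begin{equation*}
\tfrac{1}{2}\bigl(\varrho_h^{k+1}|\uu_h^{k+1}|^2 - \varrho_h^{k}|\uu_h^{k}|^2\bigr)
= (\varrho_h^{k+1}\uu_h^{k+1}-\varrho_h^{k}\uu_h^{k})\cdot \uu_h^{k+\frac{1}{2}}
 - (\varrho_h^{k+1}-\varrho_h^{k})\,\tfrac{\uu_h^{k}\cdot\uu_h^{k+1}}{2},
\end{equation*}
which is the exact discrete counterpart of the product-rule manipulation in the semi-discrete proof. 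For the magnetic part, bilinearity of $\wedge\star$ and symmetry of the Hodge inner product give the telescoping $\tfrac12(|\beta_h^{k+1}|^2-|\beta_h^{k}|^2) = \beta_h^{k+\frac12}\wedge\star(\beta_h^{k+1}-\beta_h^{k})$.

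The key step, and the one I expect to be the main obstacle, is recognising that the two bracketed averages appearing in \eqref{eqn:FEMt_full_MHD_mom} are precisely designed so that a discrete chain rule holds for $\varrho_h e(\varrho_h,\varsigma_h)$. Namely, I would verify the identity
\begin{equation*}
\varrho_h^{k+1}e(\varrho_h^{k+1},\varsigma_h^{k+1}) - \varrho_h^{k}e(\varrho_h^{k},\varsigma_h^{k})
= D_{\varrho}^{k}\,(\varrho_h^{k+1}-\varrho_h^{k}) + D_{\varsigma}^{k}\,(\varsigma_h^{k+1}-\varsigma_h^{k}),
\end{equation*}
where $D_{\varrho}^{k}$ and $D_{\varsigma}^{k}$ are exactly the two averaged difference quotients that multiply $A_{\vv_h}\varrho_h^{k+\frac12}$ and $A_{\vv_h}\varsigma_h^{k+\frac12}$ in the momentum equation. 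The check is purely algebraic: one writes
\begin{equation*}
\varrho_h^{k+1}e(\varrho_h^{k+1},\varsigma_h^{k+1}) - \varrho_h^{k}e(\varrho_h^{k},\varsigma_h^{k})
= \tfrac{1}{2}\sum (\text{four pairwise differences}),
\end{equation*}
and groups them so that two pairs factor out $\varrho_h^{k+1}-\varrho_h^{k}$ and the other two factor out $\varsigma_h^{k+1}-\varsigma_h^{k}$. Using the advection equations \eqref{eqn:FEMt_full_MHD_dens}--\eqref{eqn:FEMt_full_MHD_entrop}, these factors can then be replaced by $-\Delta t\, A^{n}_{\uu_h^{k+\frac12}}\varrho_h^{k+\frac12}$ and $-\Delta t\, A^{n}_{\uu_h^{k+\frac12}}\varsigma_h^{k+\frac12}$, respectively, so the internal-energy contribution to $(E^{k+1}-E^{k})/\Delta t$ reproduces the corresponding terms of the momentum equation (with opposite sign).

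Finally, combining the three contributions and using the kinetic identity above together with the density advection equation to convert $(\varrho_h^{k+1}-\varrho_h^{k})/\Delta t$ into $-A^{n}_{\uu_h^{k+\frac12}}\varrho_h^{k+\frac12}$, I obtain an expression that is exactly \eqref{eqn:FEMt_full_MHD_mom} with test function $\vv_h = \uu_h^{k+\frac12}$, provided one also inserts the trivially vanishing term $\varrho_h^{k+\frac12}\uu_h^{k+\frac12}\cdot\widehat{[A_{\uu_h^{k+\frac12}},A_{\uu_h^{k+\frac12}}]}=0$. Since that momentum equation holds for all discrete test vector fields, the resulting quantity is zero, giving $E^{k+1}=E^k$, as announced.
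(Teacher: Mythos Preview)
Your proposal is correct and uses essentially the same ingredients as the paper's proof: the kinetic identity, the discrete-gradient (averaged difference quotient) identity for the internal energy, the telescoping for the magnetic term, the vanishing of the bracket by skew-symmetry, and the advection equations. The only difference is the direction of the argument---you start from $E^{k+1}-E^k$ and reduce it to the momentum equation tested against $\uu_h^{k+\frac12}$, whereas the paper starts from the momentum equation with $\vv_h=\uu_h^{k+\frac12}$ and expands term by term to recover $E^{k+1}-E^k=0$---but this is the same computation read in reverse and not a genuinely different approach.
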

\begin{proof}

We test \cref{eqn:FEMt_full_MHD_mom} against $\vv_h = \uu_h^{k+\frac{1}{2}}$ and develop this term by term : The first term is
\begin{align*}
        \int_\Omega  \frac{\varrho_h^{k+1} \uu_h^{k+1} - \varrho_h^k \uu_h^k}{\Delta t} \cdot \uu_h^{k+\frac{1}{2}} &= \frac{1}{\Delta t}\int_\Omega \varrho_h^{k+1}\frac{\uu_h^{k+1}\cdot \uu_h^{k+1}}{2}-\varrho_h^{k}\frac{\uu_h^{k}\cdot \uu_h^{k}}{2}+(\varrho_h^{k+1}-\varrho_h^k)\frac{\uu_h^{k+1}\cdot \uu_h^k}{2} \\
        &= \frac{1}{\Delta t}(E^{k+1}_\kin-E^k_\kin)+ \int_\Omega \frac{\varrho_h^{k+1}-\varrho_h^k}{\Delta t}\frac{\uu_h^{k+1}\cdot \uu_h^k}{2} 
\end{align*}
where we have denoted by $E^k_\kin = \int_\Omega \frac 12 \varrho_h^k |\uu_h^k|^2$ the kinetic energy.
The second term $\varrho_h^{k + \frac{1}{2}} \uu^{k + \frac{1}{2},i}_h \cdot \widehat{[A_{\uu_h^{k+\frac{1}{2}}},A_{\uu_h^{k+\frac{1}{2}}}]}$ is zero by skew symmetry.   
The next terms read
\begin{align*}
    \int_\Omega \frac{\uu_h^k \cdot \uu_h^{k+1}}{2} A^{n}_{\uu_h^{k+\frac{1}{2}}} \varrho_h^{k+\frac{1}{2}}
    = -\int_\Omega\frac{\uu_h^k \cdot \uu_h^{k+1}}{2} \frac{\varrho_h^{k+1} - \varrho_h^k }{\Delta t}
\end{align*}
\begin{align*}
    \int_\Omega - \frac{\varrho_h^{k+1}e(\varrho_h^{k+1},\varsigma_h^{k+1})-\varrho_h^{k}e(\varrho_h^{k},\varsigma_h^{k+1})}{\varrho_h^{k+1}-\varrho_h^k} A^{n}_{\uu_h^{k+\frac{1}{2}}}\varrho_h^{k+\frac{1}{2}}
    &= \int_\Omega \frac{\varrho_h^{k+1}e(\varrho_h^{k+1},\varsigma_h^{k+1})-\varrho_h^{k}e(\varrho_h^{k},\varsigma_h^{k+1})}{\varrho_h^{k+1}-\varrho_h^k} \frac{\varrho_h^{k+1} - \varrho_h^k }{\Delta t} \\
    &= \int_\Omega\frac{\varrho_h^{k+1}e(\varrho_h^{k+1},\varsigma^{k+1})-\varrho_h^{k}e(\varrho_h^{k},\varsigma_h^{k+1})}{\Delta t}  
\end{align*}
\begin{align*}
    \int_\Omega - \frac{\varrho_h^{k+1}e(\varrho_h^{k+1},\varsigma_h^{k})-\varrho_h^{k}e(\varrho_h^{k},\varsigma_h^{k})}{\varrho_h^{k+1}-\varrho_h^k} A^{n}_{\uu_h^{k+\frac{1}{2}}}\varrho_h^{k+\frac{1}{2}} 
    = \int_\Omega\frac{\varrho_h^{k+1}e(\varrho_h^{k+1},\varsigma_h^{k})-\varrho_h^{k}e(\varrho_h^{k},\varsigma_h^{k})}{\Delta t}  
\end{align*}
\begin{align*}
    \int_\Omega - \frac{\varrho_h^{k+1}e(\varrho_h^{k+1},\varsigma_h^{k+1})-\varrho_h^{k+1}e(\varrho_h^{k+1},\varsigma_h^{k})}{\varsigma_h^{k+1}-\varsigma_h^k}A^{n}_{\uu_h^{k+\frac{1}{2}}} \varsigma_h^{k+\frac{1}{2}}
    = \int_\Omega\frac{\varrho_h^{k+1}e(\varrho_h^{k+1},\varsigma_h^{k+1})-\varrho_h^{k+1}e(\varrho_h^{k+1},\varsigma_h^{k})}{\Delta t}  
\end{align*}
\begin{align*}
    \int_\Omega - \frac{\varrho_h^{k}e(\varrho_h^{k},\varsigma_h^{k+1})-\varrho_h^{k}e(\varrho_h^{k},\varsigma_h^{k})}{\varsigma_h^{k+1}-\varsigma_h^k} A^{n}_{\uu_h^{k+\frac{1}{2}}} \varsigma_h^{k+\frac{1}{2}}
    = \int_\Omega\frac{\varrho_h^{k}e(\varrho_h^{k},\varsigma_h^{k+1})-\varrho_h^{k}e(\varrho_h^{k},\varsigma_h^{k})}{\Delta t}  
\end{align*}
\begin{align*}
    \int_\Omega \beta_h^{k+\frac{1}{2}} \wedge \star A^{n-1}_{\uu_h^{k+\frac{1}{2}}}\beta_h^{k+\frac{1}{2}} = \int_\Omega \beta_h^{k+\frac{1}{2}} \wedge \star \frac{\beta_h^{k+1} - \beta_h^k }{\Delta t} = \frac{1}{\Delta t} \int_\Omega \frac{\beta_h^{k+1}\wedge \star \beta_h^{k+1}}{2}-\frac{\beta_h^{k}\wedge \star \beta_h^{k}}{2}~.
\end{align*}
Summing all these terms gives
\begin{align*}
    \frac{1}{\Delta t}(E^{k+1}_\kin-E^k_\kin)+\int_\Omega\frac{\varrho_h^{k+1}e(\varrho_h^{k+1},\varsigma_h^{k+1})-\varrho_h^{k}e(\varrho_h^    {k},\varsigma^{k})}{\Delta t} + \frac{1}{\Delta t} \int_\Omega \frac{\beta_h^{k+1}\wedge \star \beta_h^{k+1}}{2}-\frac{\beta_h^{k}\wedge \star \beta_h^{k}}{2} =0
\end{align*}
so that the total energy is indeed preserved.
\end{proof}

\begin{proposition}
The fully discrete scheme \cref{eqn:FEMt_full} preserves the total mass, entropy and the property $d\beta_h = 0$
corresponding to the solenoidal (divergence-free) character of $\BB_h$.
\end{proposition}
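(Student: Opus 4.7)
The plan is to mimic the three semi-discrete proofs, treating the scalar invariants (mass, entropy) together and the solenoidal property separately, all while exploiting the structure of the discrete Lie derivative \cref{eqn:discrete_advection}.

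For total mass I would integrate \cref{eqn:FEMt_full_MHD_dens} over $\Omega$. Since $\varrho_h^{k+\frac{1}{2}} \in V_h^n$ is a top-degree form, $d\varrho_h^{k+\frac{1}{2}}=0$ and the definition of the discrete Lie derivative collapses to $A^n_{\uu_h^{k+\frac{1}{2}}} \varrho_h^{k+\frac{1}{2}} = d\Pi^{n-1}(i_{\uu_h^{k+\frac{1}{2}}} \varrho_h^{k+\frac{1}{2}})$. Stokes' theorem together with \cref{hyp:Pi} (which guarantees that $\Pi^{n-1}$ preserves the vanishing at $\partial\Omega$ of $i_{\uu_h^{k+\frac{1}{2}}}\varrho_h^{k+\frac{1}{2}}$, itself zero there by tangentiality of $\uu_h$ to the boundary) then gives $\int_\Omega A^n_{\uu_h^{k+\frac{1}{2}}} \varrho_h^{k+\frac{1}{2}} = 0$, whence $\int_\Omega \varrho_h^{k+1} = \int_\Omega \varrho_h^k$. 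The exact same computation applied to \cref{eqn:FEMt_full_MHD_entrop} yields the preservation of the total entropy.

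For the solenoidal property I would argue by induction on $k$. Assuming $d\beta_h^k = 0$, I would apply $d$ to \cref{eqn:FEMt_full_MHD_B} and unfold $A^{n-1}_{\uu_h^{k+\frac{1}{2}}}\beta_h^{k+\frac{1}{2}}$ via its definition: using $d\circ d = 0$ to kill the first piece, this reduces to
\begin{equation*}
d\beta_h^{k+1} - d\beta_h^k = -\Delta t\, d\Pi^{n-1}\bigl(i_{\uu_h^{k+\frac{1}{2}}}\, d\beta_h^{k+\frac{1}{2}}\bigr).
\end{equation*}
Substituting $d\beta_h^{k+\frac{1}{2}} = (d\beta_h^{k+1}+d\beta_h^k)/2 = d\beta_h^{k+1}/2$ by the induction hypothesis, this becomes a linear homogeneous equation $(I + \tfrac{\Delta t}{2}T)\gamma = 0$ in $\gamma = d\beta_h^{k+1}$, with $T\gamma := d\Pi^{n-1}(i_{\uu_h^{k+\frac{1}{2}}}\gamma)$. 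Since $\gamma = 0$ is clearly a solution, invertibility of $I + \tfrac{\Delta t}{2}T$ on $V_h^n$ (automatic for sufficiently small $\Delta t$, and in any case implicit in any well-posedness statement for \cref{eqn:FEMt_full}) forces $d\beta_h^{k+1} = 0$, completing the induction step.

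The main subtlety, and the only genuine difference from the semi-discrete case \cref{prop:divB_conv}, is this last step: the implicit midpoint time stepping produces an implicit equation for $d\beta_h^{k+1}$ rather than the direct pointwise cancellation $\partial_t d\beta_h = 0$ of the continuous-in-time argument, so a short uniqueness argument is needed. The mass and entropy statements, by contrast, are immediate, as the boundary-flux argument from the semi-discrete proof carries over unchanged to the time-averaged density and entropy.
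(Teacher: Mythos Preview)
Your proposal is correct and follows the same strategy as the paper, which simply refers back to the semi-discrete arguments of \cref{prop:mass_conv,prop:entrop_conv,prop:divB_conv} as a ``direct adaptation.'' You are in fact more careful than the paper in spelling out the uniqueness step needed for the implicit equation $(I+\tfrac{\Delta t}{2}T)\,d\beta_h^{k+1}=0$ in the solenoidal case, a point the paper glosses over.
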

\begin{proof}
The proof is a direct adaptation of the semi-discrete case, namely \cref{prop:mass_conv,prop:entrop_conv,prop:divB_conv}.
In particular we note that these conservation properties are not specific to this time discretization.
\end{proof}

\begin{proposition}
The fully discrete scheme \cref{eqn:FEMt_full} is reversible in time. 
Specifically, the solution map 
$S(\Delta t): (\uu_h^k, \varrho_h^k, \varsigma_h^k, \beta_h^k) \mapsto (\uu_h^{k+1}, \varrho_h^{k+1}, \varsigma_h^{k+1}, \beta_h^{k+1})$ 
defined by \cref{eqn:FEMt_full_MHD_mom,eqn:FEMt_full_MHD_dens,eqn:FEMt_full_MHD_entrop,eqn:FEMt_full_MHD_B} 
satisfies
$$
S(-\Delta t) = S(\Delta t)^{-1}.
$$
\end{proposition}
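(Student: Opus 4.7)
The plan is to show that the system of equations defining the update from step $k$ to step $k+1$ is invariant under the simultaneous swap $(\uu_h^k,\varrho_h^k,\varsigma_h^k,\beta_h^k) \leftrightarrow (\uu_h^{k+1},\varrho_h^{k+1},\varsigma_h^{k+1},\beta_h^{k+1})$ and $\Delta t \leftrightarrow -\Delta t$. Once this symmetry is established, assuming the solution map is well-defined, reversibility follows immediately: if $S(\Delta t)$ maps state $k$ to state $k+1$, then $S(-\Delta t)$ applied to state $k+1$ must return state $k$, giving $S(-\Delta t)\circ S(\Delta t) = \mathrm{Id}$ and hence $S(-\Delta t) = S(\Delta t)^{-1}$.

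First I would list the building blocks of \cref{eqn:FEMt_full} that need to be checked under the swap. All midpoint averages $a^{k+\frac{1}{2}} = (a^{k+1}+a^k)/2$ are manifestly symmetric in $k, k+1$; every finite difference $(a^{k+1}-a^k)/\Delta t$ is also invariant since both the numerator and denominator flip sign simultaneously. The kinetic contribution $(\varrho_h^{k+1}\uu_h^{k+1}-\varrho_h^k\uu_h^k)/\Delta t$ and the bilinear term $\uu_h^k\cdot\uu_h^{k+1}/2$ are therefore unchanged, and the commutator term $\varrho_h^{k+\frac{1}{2}}\uu_h^{k+\frac{1}{2}}\cdot \widehat{[A_{\vv_h},A_{\uu_h^{k+\frac{1}{2}}}]}$ as well as the magnetic term $\beta_h^{k+\frac{1}{2}}\wedge \star A_{\vv_h}^{n-1}\beta_h^{k+\frac{1}{2}}$ only depend on midpoint values, hence are symmetric.

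The only nontrivial step is to verify that the two thermodynamic difference quotients appearing in \cref{eqn:FEMt_full_MHD_mom} are invariant under the swap. For the density quotient, the sum
\begin{equation*}
\frac{\varrho_h^{k+1}e(\varrho_h^{k+1},\varsigma_h^{k+1})-\varrho_h^{k}e(\varrho_h^{k},\varsigma_h^{k+1})}{\varrho_h^{k+1}-\varrho_h^k} + \frac{\varrho_h^{k+1}e(\varrho_h^{k+1},\varsigma_h^{k})-\varrho_h^{k}e(\varrho_h^{k},\varsigma_h^{k})}{\varrho_h^{k+1}-\varrho_h^k}
\end{equation*}
becomes, after exchanging $k$ and $k+1$, a sum of two quotients whose numerators and denominators each change sign, and whose two summands are permuted; the resulting expression is identical to the original. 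The entropy quotient is treated by the same argument. The advection equations \cref{eqn:FEMt_full_MHD_dens,eqn:FEMt_full_MHD_entrop,eqn:FEMt_full_MHD_B} are obviously invariant under the swap since both sides contain only midpoint values and finite differences.

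The main (mild) obstacle is purely formal: strictly speaking one should assume that $S(\Delta t)$ is well-defined, i.e. that the nonlinear system \cref{eqn:FEMt_full} admits a unique solution $(\uu_h^{k+1},\varrho_h^{k+1},\varsigma_h^{k+1},\beta_h^{k+1})$ given the previous step, at least for $|\Delta t|$ small enough. Under this standing assumption the symmetry argument above immediately yields $S(-\Delta t)=S(\Delta t)^{-1}$ and completes the proof. I would include a one-line remark that the reversibility is a structural consequence of using centered (midpoint) differences together with symmetric averages in the thermodynamic quotients, and would not depend on the specific choice of FEEC spaces or hat map made later in \cref{sec:Xh_map_pi}.
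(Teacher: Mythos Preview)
Your proposal is correct and follows essentially the same approach as the paper: both arguments verify that the scheme \eqref{eqn:FEMt_full} is invariant under the simultaneous swap $k\leftrightarrow k+1$ and $\Delta t\leftrightarrow -\Delta t$, observing that midpoint averages are symmetric, finite differences are invariant (sign flips cancel), and the discrete-gradient quotients survive because numerator and denominator both change sign while the two summands are permuted. Your version is in fact more explicit than the paper's---and your remark on the standing well-posedness assumption for $S(\Delta t)$ is a welcome clarification that the paper leaves implicit.
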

\begin{proof}
To show this property we need to verify that for a reversed time step $\widetilde {\Delta t} = -\Delta t$,
the solution $(\tilde \uu_h^{k+1}, \tilde \varrho_h^{k+1}, \tilde \varsigma_h^{k+1}, \tilde \beta_h^{k+1})$ 
starting from 
$(\tilde \uu_h^{k}, \tilde \varrho_h^{k}, \tilde \varsigma_h^{k}, \tilde \beta_h^{k}) = (\uu_h^{k+1}, \varrho_h^{k+1}, \varsigma_h^{k+1}, \beta_h^{k+1})$ coincides with $(\uu_h^k, \varrho_h^k, \varsigma_h^k, \beta_h^k)$.
This comes from direct computation, switching the indices $k$ and $k+1$
in \cref{eqn:FEMt_full_MHD_mom,eqn:FEMt_full_MHD_dens,eqn:FEMt_full_MHD_entrop,eqn:FEMt_full_MHD_B}: 
indeed all the terms whose sign is changed are multiplied by $\Delta t$, 
and $(\uu_h^{k+\frac{1}{2}}, \varrho_h^{k+\frac{1}{2}}, \varsigma_h^{k+\frac{1}{2}}, \beta_h^{k+\frac{1}{2}})$ 
is left unchanged 
(note that the discrete gradient terms have a change of sign in both the numerator and denominator).
\end{proof}

\subsection{Particular choice for $X_h$, hat map and projection $\Pi$}
\label{sec:Xh_map_pi}

So far, we have only presented our method in the setting of general discrete de Rham sequences and operators 
satisfying \cref{hyp:DeRham,hyp:Pi,hyp:hat-map}. 
Before presenting our numerical results, let us now specify some of the choices that will be used in our implementation. 

Our first choice is to use $X_h = (V^0_h)^n$, which means that we represent vector fields as vectors of scalar functions
(from the same discrete space as our 0-forms). 
The hat map will then be given by 
\begin{equation}
\label{eqn:hat_map}
\hat{A}_i = A^0(x_i) ~ ,
\end{equation}
where $x_i$ denotes the $i$-th coordinate seen as a function (0-form), 
and where $\hat{A}_i$ is the $i$-th component of the vector-valued function $\hat{A} \in X_h$. This definition mimics the formula 
\begin{equation}
\label{eqn:justif_def_hat}
u_i = \uu \cdot \grad (x_i) ~ ,
\end{equation}
valid for any vector field $\uu$ with components $u_i$. We note that this particular choice is used in other variational discretization, see e.g.~\cite{pavlov2011structure,natale2018variational,gawlik2021variational}.
An important observation is that this hat map provides a left-inverse for the 
discrete Lie derivative \eqref{eqn:discrete_advection}, independently of the underlying projection operator $\Pi$.

\begin{proposition}

The hat map $\hat{\cdot}$ defined by \cref{eqn:hat_map} satisfies \cref{hyp:hat-map} and gives the following formula for the bracket term : 
\begin{equation}
\widehat{[A_{\vv},A_{\uu}]_i} = \Pi^0(\vv \cdot \grad u_i - \uu \cdot \grad v_i) ~\quad ~ \forall \uu, \vv \in X_h~.
\end{equation}
\end{proposition}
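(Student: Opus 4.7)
The plan is to unpack the definition \cref{eqn:discrete_advection} on $0$-forms, where the interior product term vanishes, and then evaluate the resulting operator on the coordinate functions $x_i$.

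For the left-inverse property required by \cref{hyp:hat-map}, I would compute $\widehat{A_{\uu_h}}_i = A_{\uu_h}^0(x_i)$ directly. Since the interior product of any vector field with a $0$-form is zero, the Cartan-type formula \cref{eqn:discrete_advection} collapses on $0$-forms to $A_{\uu_h}^0(\alpha) = \Pi^0(i_{\uu_h} d\alpha) = \Pi^0(\uu_h \cdot \grad \alpha)$, using the proxy identification of \cref{tab:proxy}. Applied to $\alpha = x_i$, this yields $\Pi^0(u_{h,i})$, and since $X_h = (V_h^0)^n$ we have $u_{h,i} \in V_h^0$, so $\Pi^0$ acts as the identity; hence $\widehat{A_{\uu_h}}_i = u_{h,i}$. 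This both exhibits the required left inverse and, as a by-product, shows that $\uu_h \mapsto A_{\uu_h}$ is injective.

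For the bracket formula, I would evaluate the degree-$0$ component of $[A_{\vv},A_{\uu}] = A_{\vv}A_{\uu} - A_{\uu}A_{\vv}$ on $x_i$. Using the previous reduction, the inner application gives $A_{\uu}^0(x_i) = \Pi^0(u_i) = u_i \in V_h^0$, and then the outer application gives $A_{\vv}^0(u_i) = \Pi^0(\vv \cdot \grad u_i)$ by the same formula. Symmetrically, $A_{\uu}^0(A_{\vv}^0(x_i)) = \Pi^0(\uu \cdot \grad v_i)$, and subtracting the two yields the announced identity.

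The only subtlety, which is also the main point requiring care, is that the coordinate function $x_i$ is in general not an element of $V_h^0$, so strictly speaking $A^0$ needs to be extended beyond its nominal domain in order to evaluate $A^0(x_i)$. This extension is unambiguous because on $0$-forms the operator \cref{eqn:discrete_advection} reduces to $\Pi^0$ composed with a classical first-order differential operator, which makes sense on any sufficiently smooth input; note also that after one application the output lands in $V_h^0$, so the composition in the bracket computation returns to the nominal domain of the operators. Once this convention is fixed, both claims reduce to a short direct computation and no further difficulty arises.
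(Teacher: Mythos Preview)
Your proposal is correct and follows essentially the same computation as the paper: reduce $A^0$ on $0$-forms to $\Pi^0(\uu\cdot\grad\,\cdot\,)$, evaluate on $x_i$, and iterate for the bracket. Your remark that $x_i$ need not lie in $V_h^0$ and hence the definition of $A^0(x_i)$ relies on the natural extension of \cref{eqn:discrete_advection} to smooth inputs is a valid point that the paper leaves implicit; otherwise the two arguments are identical.
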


\begin{proof}
We first check that this definition complies with \cref{hyp:hat-map}, that is the hat-map is a left inverse for $\uu \mapsto A_{\uu}$ : for any $\uu \in X_h$ we have indeed
\begin{equation*}
(\widehat{A_{\uu}})_i = A_{\uu}^0(x_i) = \Pi^0 (i_{\uu} d x_i) = \Pi^0 (i_{\uu} \ee_i) = \Pi^0 (u_i) = u_i
\end{equation*}
where the first equality is \cref{eqn:hat_map}, the second one is \eqref{eqn:discrete_advection} 
(noting that $i_{\uu}$ vanishes on any 0-form), the third one uses that 
$d x_i = \grad x_i = \ee_i$ the $i$-th canonical basis vector, and the last one uses that $\Pi^0$ is a projection onto $V^0_h$.
As for the bracket formula, we write for $\uu$ and $\vv$ in $X_h$
\begin{align*}
\widehat{[A_{\vv},A_{\uu}]_i} & = [A_{\vv},A_{\uu}](x_i) \\
	& = A_{\vv} u_i - A_{\uu} v_i \\
	& = \Pi^0(i_{\vv} d u_i - i_{\uu} d v_i)\\
	& = \Pi^0(\vv \cdot \grad u_i - \uu \cdot \grad v_i) ~.
\end{align*}
\end{proof}

Finally, for our numerical study we use commuting projections based on interpolation/histopolation for $\Pi$, which provide good results and simple implementation for our test cases in cartesian domains. 
Some tests were conducted with the $L^2$ projection but no substantial difference was observed.

\subsection{FEM equation in vector proxy notation}
\label{sec:FEM_proxy}
For completeness we reformulate our schemes in a more standard vector notation.
The semi-discrete scheme \cref{eqn:discrete_Xh} reads : find $\uu_h \in X_h$, $\rho_h, s_h \in V^n_h$ and $\BB_h \in V^{n-1}_h$ such that
\begin{subequations}
\label{eqn:eq_semi_proxy}
\begin{equation}
\begin{aligned}
&\int_\Omega \partial_t (\rho_h \uu_h) \cdot \vv_h 
  - \rho_h \sum_{i=1}^n u_i \big(\Pi^0(\vv_h \cdot \grad u_i - \uu_h \cdot \grad v_i)\big) 
  \\
  & \mspace{100mu}+ \Big(\frac{1}{2}|\uu_h|^2 - e(\rho_h, s_h) - \rho_h \partial_{\rho_h} e(\rho_h, s_h) \Big) \Div\Pi^2(\rho_h \vv_h) \\
  & \mspace{180mu}
  - 	 \rho_h \partial_{s_h}e(\rho_h, s_h)  \Div\Pi^2(s_h \vv_h) - \BB_h \cdot \curl \Pi^1(\BB_h \times \vv_h) = 0 \qquad
  \forall \vv_h \in X_h
\end{aligned}
\end{equation}
\begin{equation}
\partial_t \rho_h + \Div\Pi^2(\rho_h \uu_h)  = 0 ~, 
\end{equation}
\begin{equation}
\partial_t s_h + \Div\Pi^2(s_h \uu_h) = 0 ~, 
\end{equation}
\begin{equation}
\partial_t \BB_h + \curl \Pi^1(\BB_h \times \uu_h) = 0 ~.
\end{equation}
\end{subequations}
The fully discrete scheme \cref{eqn:FEMt_full} reads:
\begin{subequations}
\label{eqn:eq_full_proxy}
\begin{multline} 
    \int_\Omega  \frac{\rho_h^{k+1} \uu_h^{k+1} - \rho_h^k \uu_h^k}{\Delta t} \cdot \vv_h 
    - \rho_h^{k + \frac{1}{2}} \sum_{i=1}^n u^{k + \frac{1}{2}}_i \big(\Pi^0(\vv \cdot \grad u_i^{k + \frac{1}{2}} - \uu^{k + \frac{1}{2}} \cdot \grad v_i) \big)
    \\
    + (\frac{\uu_h^k \cdot \uu_h^{k+1}}{2} - \frac{1}{2}(\frac{\rho_h^{k+1}e(\rho_h^{k+1},s_h^{k+1})-\rho_h^{k}e(\rho_h^{k},s_h^{k+1})}{\rho_h^{k+1}-\rho_h^k} + \frac{\rho_h^{k+1}e(\rho_h^{k+1},s_h^{k})-\rho_h^{k}e(\rho_h^{k},s_h^{k})}{\rho_h^{k+1}-\rho_h^k})) \Div\Pi^2(\rho_h^{k+\frac{1}{2}} \vv_h)  \\
    - \frac{1}{2}(\frac{\rho_h^{k+1}e(\rho_h^{k+1},s_h^{k+1})-\rho_h^{k+1}e(\rho_h^{k+1},s_h^{k})}{s_h^{k+1}- s_h^k}+\frac{\rho_h^{k}e(\rho_h^{k},s_h^{k+1})-\rho_h^{k}e(\rho_h^{k},s_h^{k})}{s_h^{k+1}-s_h^k}) \Div\Pi^2(s_h^{k+\frac{1}{2}} \vv_h) \\
    - \BB_h^{k+\frac{1}{2}} \cdot \curl \Pi^1 (\BB_h^{k+\frac{1}{2}} \times \vv_h) = 0 ~ \qquad \forall \vv_h \in X_h ~,  
\end{multline}
\begin{equation}
    \frac{\rho_h^{k+1} - \rho_h^k }{\Delta t} + \Div\Pi^2(\rho_h^{k+\frac{1}{2}} \uu_h^{k+\frac{1}{2}}) = 0 ~, 
\end{equation}
\begin{equation}
    \frac{s_h^{k+1} - s_h^k }{\Delta t} + \Div\Pi^2(s_h^{k+\frac{1}{2}} \uu_h^{k+\frac{1}{2}}) = 0 ~, 
\end{equation}
\begin{equation}
    \frac{\BB_h^{k+1} - \BB_h^k }{\Delta t} + \curl \Pi^1 (\BB_h^{k+\frac{1}{2}} \times \uu_h^{k+\frac{1}{2}}) = 0 ~.
\end{equation}
\end{subequations}
\section{Numerical results}
\label{sec:numerics}

\subsection{Choice of discrete spaces} 
\label{sec:discrete_Vh}

In this section we study the accuracy of our 
scheme \cref{eqn:eq_full_proxy}, as well as 
the numerical conservation of the invariants claimed in \cref{sec:time_discretization}.
These numerical experiments were obtained with an implementation using 
the Psydac Finite Element library~\cite{guclu2022psydac}, 
which involves isogeometric tensor-product spline finite elements that fit in the FEEC framework.
These spaces are built via 1D spline spaces of maximal regularity, with anisotropic polynomial degree for the discrete $H(\Div)$ space to preserve the de Rham structure at the discrete level. More precisely, we consider the following discrete de Rham sequence 
(which satisfies \cref{hyp:DeRham}):
\begin{equation} \label{Discrete_deRham}
  \xymatrix{
   S_{p+1} \otimes S_{p+1} \otimes S_{p+1} \ar[r]^-{\grad} & 
   {  \left( \begin{smallmatrix}
   S_{p} &\otimes& S_{p+1} &\otimes& S_{p+1} \\ S_{p+1} &\otimes& S_{p} &\otimes& S_{p+1} \\ S_{p+1} &\otimes& S_{p+1} &\otimes& S_{p}
   \end{smallmatrix} \right) } \ar[r]^-{\curl} & 
   {  \left( \begin{smallmatrix}
   S_{p+1} &\otimes& S_{p} &\otimes& S_{p} \\ S_{p} &\otimes& S_{p+1} &\otimes& S_{p} \\ S_{p} &\otimes& S_{p} &\otimes& S_{p+1}
   \end{smallmatrix} \right) } \ar[r]^-{\Div} &
   S_{p} \otimes S_{p} \otimes S_{p} ~ 
  }
\end{equation}
where $S_p$ denotes the space of univariate splines of degree $p$. We refer to~\cite{buffa2011isogeometric} for more details. 
These spline spaces are built on a Cartesian grid of step size $h$.

\subsection{Barotropic fluid : convergence analysis}
Our first numerical test is conducted on a model of barotropic fluid which is simpler than \eqref{eqn:MHD}:
in this model the internal energy depends only of the density via $e(\rho)=\frac{\rho}{2}$ and there is no magnetic field. 

The goal of this first test is to evaluate the convergence of our scheme and verify the claimed conservations properties. For this we use a compressible Taylor-Green Vortex. The domain is a square $\Omega = [0,\pi]^2$ with periodic boundary conditions, and the initial conditions : 
$$
\begin{aligned}
\rho(x,y,0)&=1 \\
\uu(x,y,0)&=(1- 0.1\cos(2x)\sin(2y),1+0.1\cos(2y)\sin(2x)).
\end{aligned}
$$
The simulation is run until $t_f=1$, and we show the errors and convergence orders 
in \cref{tab:convergence}. For this test we used a constant time step $\Delta t = 1\E{-4}$ and a tolerance of $1\E{-8}$ for solving the non-linear system to ensure that the dominating error is due to spatial discretization. 
This demonstrates the high order accuracy of our method, with a more than optimal 
rate of convergence in this simple setting. Error is obtained by comparing with a 
reference solution computed with the same polynomial degree and $h = \pi 2^{-7}$. 
By $O(\rho_h)$ (resp. $O(\uu_h)$) we denote the order of convergence that is obtained from error $\epsilon_i$ 
and mesh size $h_i$ via $\frac{log(\epsilon_i)-log(\epsilon_{i-1})}{log(h_{i-1})-log(h_{i})}$. 
We also computed the error in the invariants and observed that for all discretizations, 
the error in the total density and total energy was never above $2\E{-12}$, which is much lower than the tolerance of the non-linear solver $1\E{-8}$, showing the robustness of our scheme.

\begin{table}
\begin{tabular}{|p{.6cm}||p{1.4cm}|p{.8cm}|p{1.4cm}|p{.8cm}||p{1.4cm}|p{.8cm}|p{1.4cm}|p{.8cm}| }
 \hline
 &\multicolumn{4}{|c||}{$p=1$} & \multicolumn{4}{|c|}{$p=2$} \\
 $h/\pi$ & $\rho_h-\rho$ & $O(\rho_h)$ & $\uu_h-\uu$ & $O(\uu_h)$ & $\rho_h-\rho$ & $O(\rho_h)$ & $\uu_h-\uu$ & $O(\uu_h)$ \\
 \hline
 $2^{-3}$ & $1.6\E{-2}$ & $ -  $ & $5.6\E{-2}$ & $ -  $ & $5.5\E{-3}$ & $ -  $ & $5.0\E{-3}$ & $ -  $ \\
 $2^{-4}$ & $5.1\E{-3}$ & $1.62$ & $1.3\E{-2}$ & $2.11$ & $3.3\E{-4}$ & $4.04$ & $3.3\E{-4}$ & $3.94$ \\
 $2^{-5}$ & $1.2\E{-3}$ & $2.06$ & $3.0\E{-3}$ & $2.10$ & $2.0\E{-5}$ & $4.06$ & $1.9\E{-5}$ & $4.06$ \\
 $2^{-6}$ & $2.5\E{-4}$ & $2.32$ & $6.1\E{-4}$ & $2.33$ & $1.2\E{-6}$ & $4.10$ & $1.1\E{-6}$ & $4.10$ \\

 \hline
\end{tabular}
\caption{Errors and convergence orders for the travelling vortex at $t=1$, for different grids and spline orders.}
\label{tab:convergence}
\end{table}

\subsection{Barotropic Kelvin-Helmholtz instability}

Our second test is a double shear layer (Kelvin-Helmholtz instability), also run with the barotropic fluid model. 
The domain for this test is a periodic rectangle $[0,1] \times [-1,1]$. The fluid is initiated with a central 
horizontal layer with positive $x$ velocity and two exterior horizontal layers with negative $x$ velocity. 
The initial density is uniform and the instability is triggered by a small perturbation in the initial $y$ velocity. 
More precisely the initial conditions are :
\begin{align*}
\rho(x,y,0) &=1 ~, \\
u_x (x,y,0) &= 0.5(T_\delta(y)-1) ~,\\
u_y (x,y,0) &= 0.1 \sin(2 \pi x) ~, \\
\text{with } T_\delta(y) &= -\tanh((y - 0.5)/\delta)+\tanh((y + 0.5)/\delta) ~,
\end{align*}
where $\delta=\frac{1}{15}$. In \cref{fig:evol_vort_Shear} we show the evolution of the vorticity for this test. 
We see that our scheme is able to produce very good results for this advection dominated test, 
even on the vorticity which is not a primal variable of the scheme, at very small scale 
(the size of the vorticity filament is comparable with the underlying grid size). 
On this more difficult test case the conservation of energy and total mass were also excellent, 
with respective variations on the order of $1\E{-13}$ and $2\E{-13}$. 

\begin{figure}
    \centering
    \includegraphics[width=0.27\textwidth]{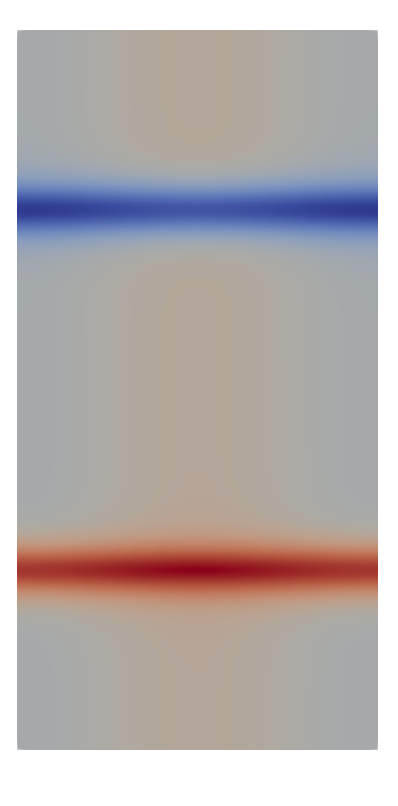}
    \includegraphics[width=0.27\textwidth]{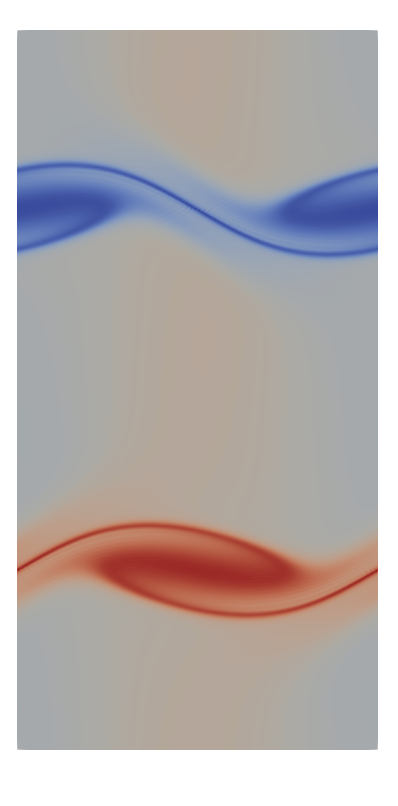}
    \includegraphics[width=0.27\textwidth]{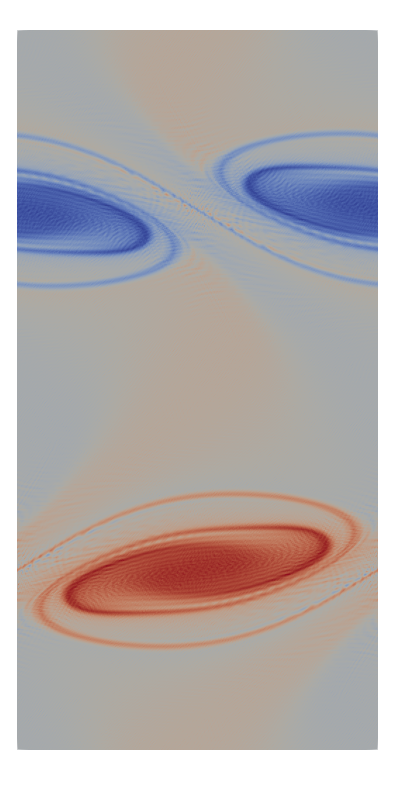}
    \includegraphics[width=0.1\textwidth]{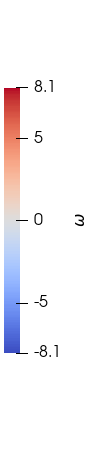}
    \caption{Vorticity at $t=0, \ 2, \ 4$, for the double shear layer simulation with a $512 \times 256$ grid,
    using the discrete spline de Rham sequence \eqref{Discrete_deRham} with $p=2$, and $\Delta t = 5\E{-4}$.}
    \label{fig:evol_vort_Shear}
\end{figure}

\subsection{Fully compressible Kelvin-Helmholtz instability}

We now shift to a more complex fluid model, namely where the energy depends on both density and entropy via 
$e(\rho,s)=\rho^{\gamma-1}e^{s/\rho}$, here with $\gamma = 7/5$. The first test we consider with this model 
is the same as the previous one, but with variable density and entropy in the initial condition and
a higher density in the central region. The entropy is also adjusted to have an initial constant pressure. 
The initial condition are therefore :
\begin{align*}
\rho(x,y,0) &= 0.5 + 0.75 T_\delta(y) ~, \\
s(x,y,0) &= -\rho(x,y,0)(\log(\gamma-1)+\gamma \log(\rho(x,y,0))) ~, \\
u_x (x,y,0) &= 0.5(T_\delta(y)-1) ~,\\
u_y (x,y,0) &= 0.1 \sin(2 \pi x) ~, \\
\text{with } T_\delta(y) &= -\tanh((y - 0.5)/\delta)+\tanh((y + 0.5)/\delta) ~,
\end{align*}
where $\delta=\frac{1}{15}$. In \cref{fig:evol_rho_FullShear} we show the evolution of the density, 
and we see that our scheme is again able to produce satisfying result even in the presence of 
more compressible effects. Advection seems to be well resolved too, as we see that the density forms a cylinder as expected. 
However we see small oscillations appearing at the end of the simulation: 
These oscillations are typically on the order of the mesh size and are probably 
due to the absence of any dissipative mechanism or upwinding in our scheme that 
could help resolve sub mesh scale phenomena. On this simulation we still have a 
very good preservation of total density and entropy ($1\E{-14}$), however the bigger 
complexity of the flow creates some (relatively small) changes in energy that might 
be imputable to the non-linear solve. The total energy varies indeed by $8\E{-7}$ 
which is the same order of the number of time step times the non-linear solver 
tolerance ($1\E{5} \times 1\E{-12}$). Thus, we can claim that invariants are very 
well preserved despite the appearing of oscillations, which indicates that we are 
reaching the limit of the smooth regime allowed by of our scheme.
\begin{figure}
    \centering
    \includegraphics[width=0.27\textwidth]{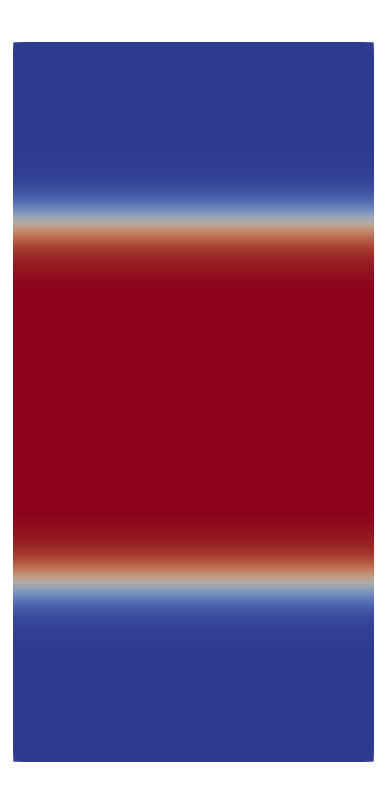}
    \includegraphics[width=0.27\textwidth]{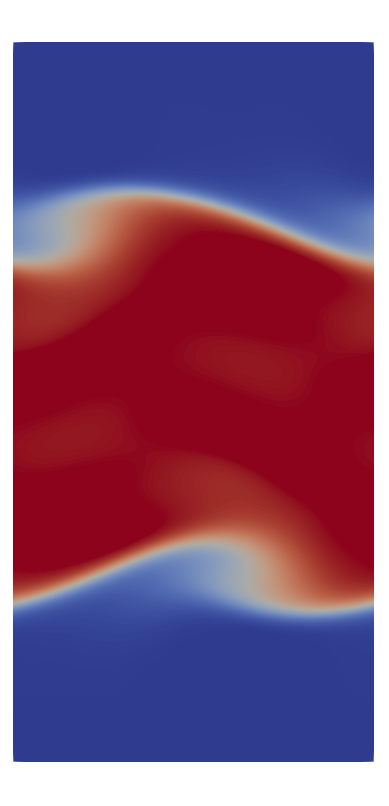}
    \includegraphics[width=0.27\textwidth]{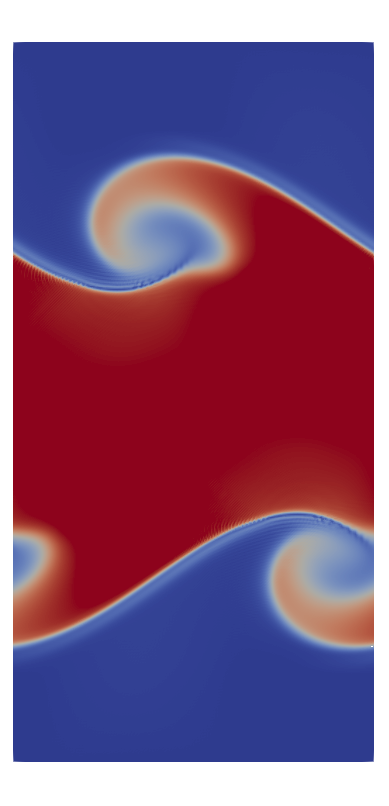}
    \includegraphics[width=0.1\textwidth]{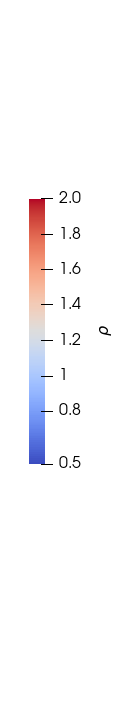}
    \caption{Density at $t=0, \ 1, \ 2$, for the fully compressible double shear layer simulation with a $512 \times 256$ grid, using the discrete spline de Rham sequence \eqref{Discrete_deRham} with degree $p=1$, 
    and $\Delta t = 2\E{-4}$.}
    \label{fig:evol_rho_FullShear}
\end{figure}
With this setup we also tested the reversibility of our method. For this purpose we ran the simulation until a final time of $t=2$, then we reversed the time step $\Delta t \mapsto -\Delta t$ and run the simulation for the same time. 
The error between the initial and final solution is then $1.6 \E{-3}$ in the velocity field and $5.9\E{-4}$ in the density field: the respective error fields are plotted in \cref{fig:FullShear_backtime}. This shows that our method 
is able to preserve the reversibility of the original system up to high accuracy, which is a fundamental property for the long time simulation of physical systems with a very low level of dissipation.
%
\begin{figure}
    \centering
    \includegraphics[width=0.35\textwidth]{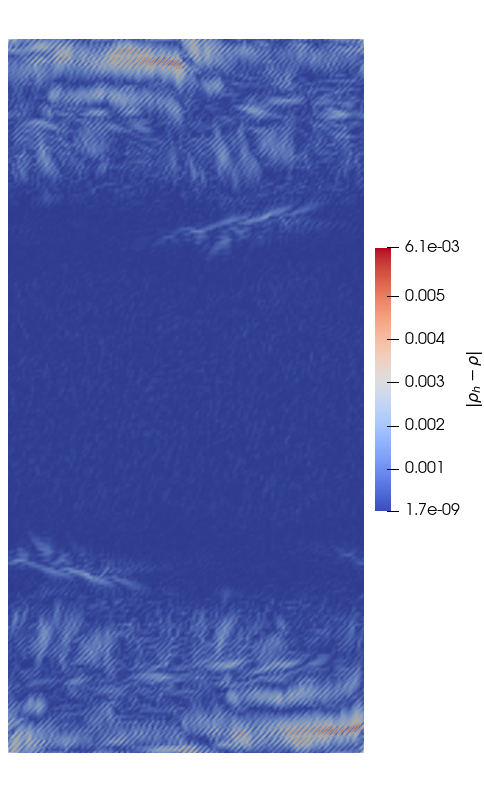}
    \includegraphics[width=0.35\textwidth]{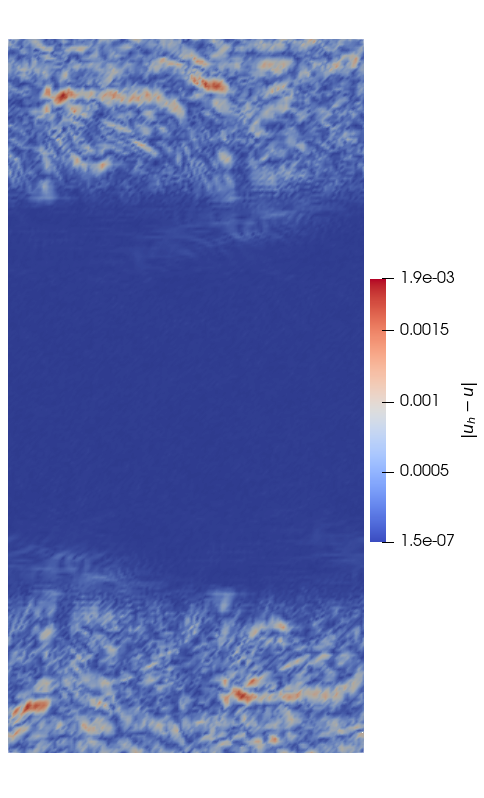}
    \caption{Error in density and velocity after reversing the scheme at $t=2$ for the fully compressible double shear layer simulation, using the spline sequence \eqref{Discrete_deRham} with degree $p=1$ on a $256 \times 128$ grid, 
    and $\Delta t = 2\E{-4}$.}
    \label{fig:FullShear_backtime}
\end{figure}
\subsection{Fully compressible fluid : Rayleigh Taylor instability}
Our second and last test with the fully compressible fluid model is a Rayleigh Taylor instability. For this simulation we added a gravitational force to our model, the Lagrangian becoming $l(\uu, \rho, s) = \int_\Omega \rho\frac{|u|^2}{2} - \rho e(\rho,s)-\rho \phi$ 
where $\phi$ is the gravitational potential, given here by $\phi = -y$ corresponding to an upward gravitational force. 
The domain is a rectangle $[0, 0.25] \times [0, 1]$ and the initial conditions are : 
\begin{align*}
\rho(x,y,0) &= 1.5 - 0.1\tanh(\frac{y-0.5}{0.02}) ~, \\
s(x,y,0) &= -\rho(x,y,0) \log(\frac{p(x,y)}{(\gamma-1)\rho(x,y,0)^\gamma}) \\
\uu(x,y,0) &= (0, -0.025 \sqrt{\frac{\gamma p (x,y)}{\rho(x,y,0)}} \cos(8 \pi x) \exp(-\frac{(y-0.5)^2}{0.09})) \\
\text{with } p(x,y) &= 1.5y + 1.25 + 0.1(0.5-y)\tanh(\frac{y-0.5}{0.02}) ~.
\end{align*}
Results are presented in \cref{fig:plot_RT1}, where one recognizes the typical shape of a Rayleigh Taylor instability. 
We are able to run this simulation without any type of upwiding which seems quite remarkable to us, 
even if this lack of stabilization might be the reason for the appearing of small instabilities at the walls. 
\begin{figure}
    \centering
    \includegraphics[width=0.15\textwidth]{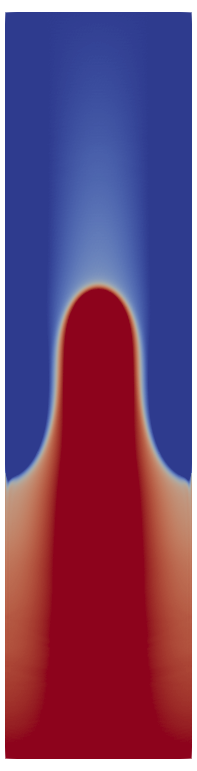}
    \includegraphics[width=0.15\textwidth]{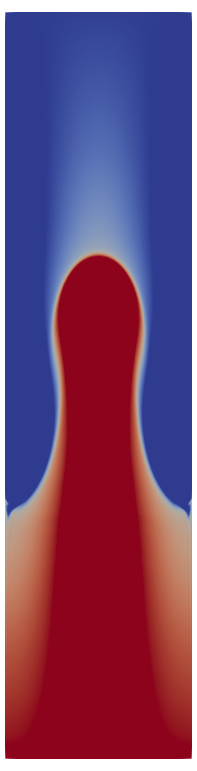}
    \includegraphics[width=0.15\textwidth]{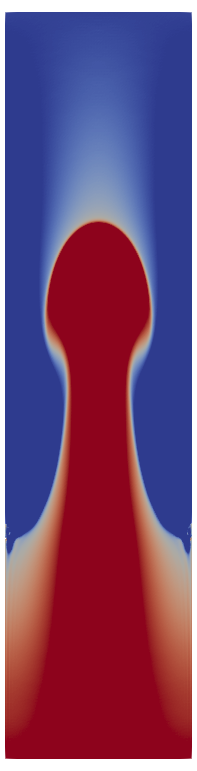}
    \includegraphics[width=0.15\textwidth]{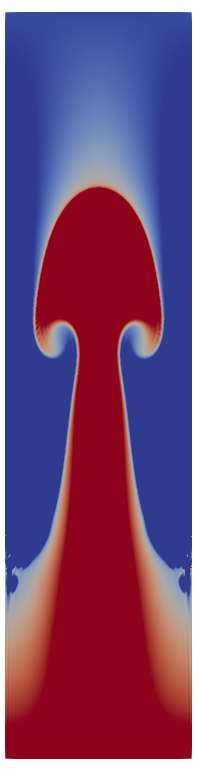}
    \includegraphics[width=0.15\textwidth]{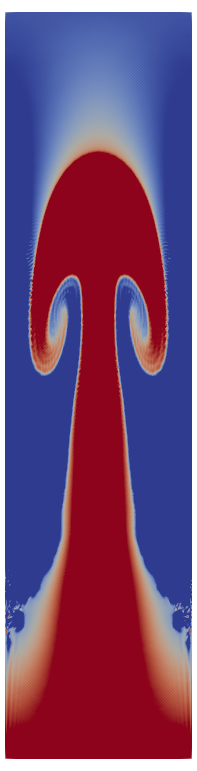}
    \includegraphics[width=0.15\textwidth]{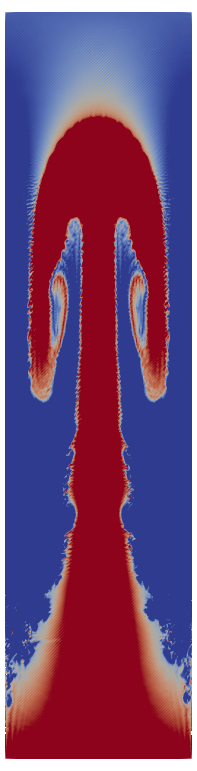}
    \includegraphics[width=\textwidth]{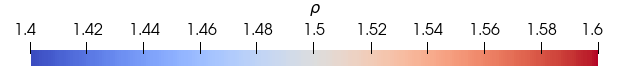}
    \caption{Evolution of the density for the Rayleigh-Taylor instability, for the times
    $t=2.0,\ 2.4,\ 2.8,\ 3.2,\ 3.6,\ 4.0$. This dissipation-less simulation 
    uses the spline spaces \eqref{Discrete_deRham} with $p=1$ on a
    $128 \times 512$ grid, and $\Delta t = 2\E{-4}$.}
    \label{fig:plot_RT1}
\end{figure}
\subsection{MHD : Alfvén wave}
We next turn to MHD simulations with a fully compressible fluid corresponding to $\gamma = 5/3$. 
Our first test is a circularly polarized Alfvén wave, where an analytical solution is known. 
The setup for this test is the following : the domain is a periodic rectangle $[0,\frac{1}{\cos \alpha}] \times [0, \frac{1}{\sin \alpha}]$ and the initial wave is given by :
\begin{align*}
\rho(x,y,0) &= 1 ~, \\
s(x,y,0) &= \log(0.1/(\gamma-1)) ~, \\
u_x (x,y,0) &= (\cos \alpha) u_{||} - (\sin \alpha) u_{\perp} ~,\\
u_y (x,y,0) &= (\sin \alpha) u_{||} + (\cos \alpha) u_{\perp} ~,\\
u_z (x,y,0) &= 0.1\cos(2\pi(x\cos \alpha+y \sin \alpha)) ~,\\
B_x (x,y,0) &= (\cos \alpha) B_{||} - (\sin \alpha) B_{\perp} ~,\\
B_y (x,y,0) &= (\sin \alpha) B_{||} + (\cos \alpha) B_{\perp} ~,\\
B_z (x,y,0) &= 0.1\cos(2\pi(x\cos \alpha+y \sin \alpha)) ~,\\
\text{with } u_{\perp} = B_{\perp} &= 0.1 \sin(2\pi (x\cos \alpha +y \sin \alpha)), ~ B_{||} = 1, ~ \ u_{||} = 0 ~.
\end{align*}
Here we have chosen $\alpha = \frac{\pi}{6}$. In \cref{fig:plot_Alven} we show the results of the simulation after one period ($t=1$). We see that the wave is very well resolved and we do not see any damping in the amplitude of phase shift, showing that our scheme is able to resolve well the coupling between $\uu$ and $\BB$. As before, density and entropy are preserved at a precision of $1\E{-14}$, energy is also very well preserved ($1\E{-11}$) and we also see in this test that the squared $L^2$ norm of the divergence of $\BB$ is of order $1\E{-27}$ which proves that the divergence preservation holds up to machine precision.

To better test our discretization we perform the same test on a longer time range of $75$ periods in \cref{fig:plot_Alven_long}. We then see some slight shifts in the wave, nevertheless the results are still very close to the initial wave, despite the rather coarse discretization ($N=16$) used here.

\begin{figure}
    \centering
    \includegraphics[width=0.49\textwidth]{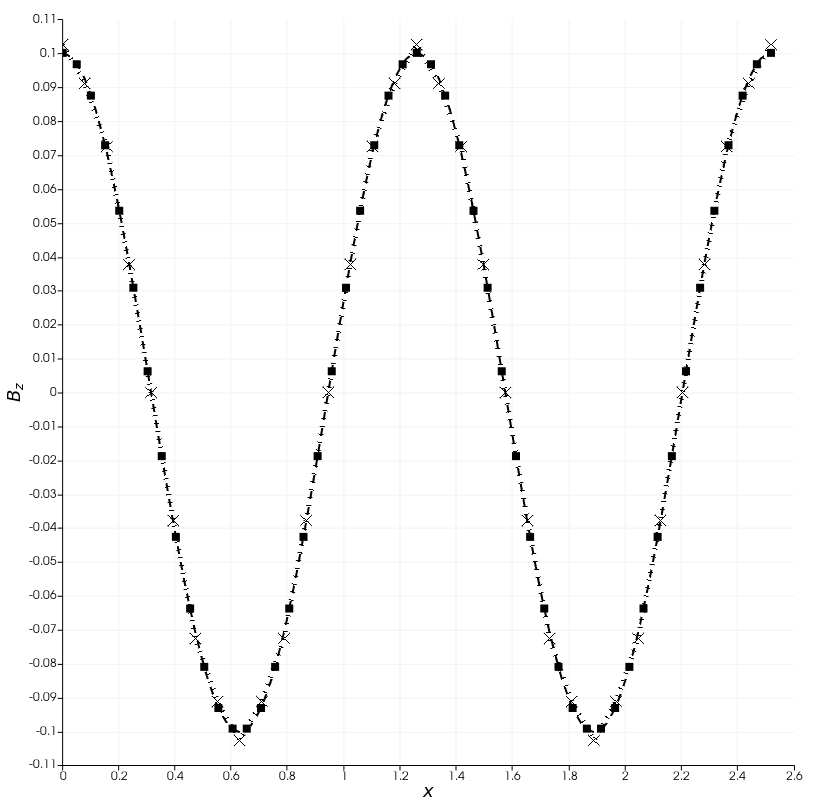}
    \includegraphics[width=0.49\textwidth]{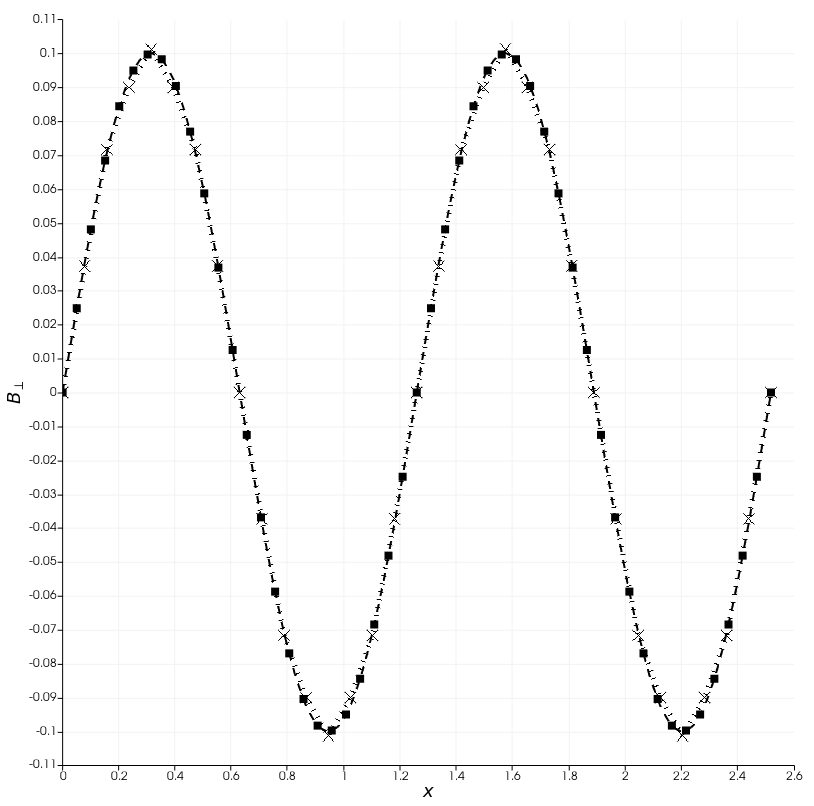}
    \includegraphics[width=0.49\textwidth]{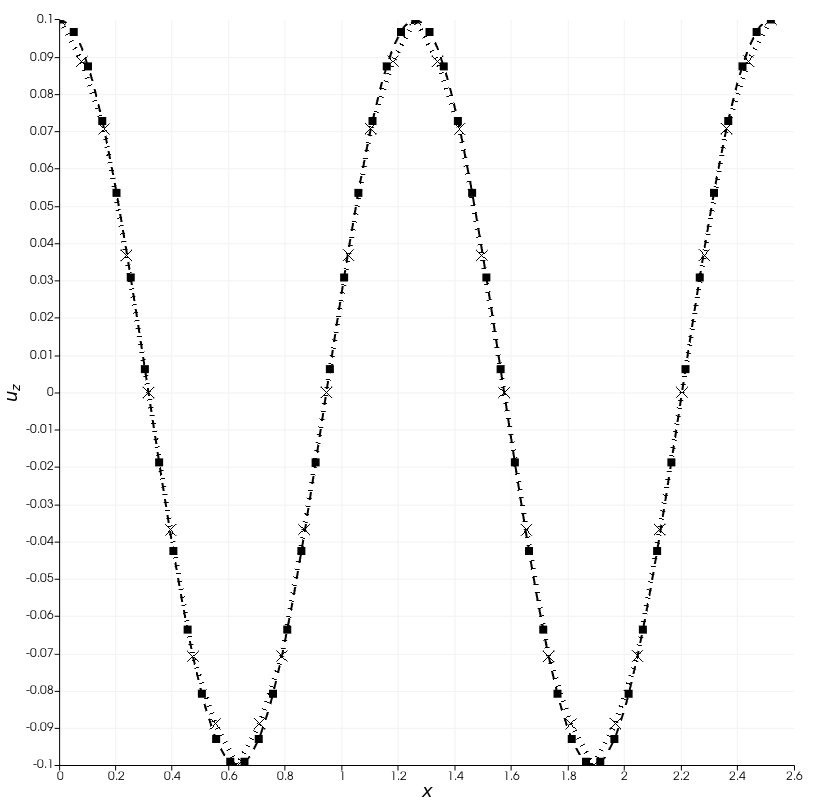}
    \includegraphics[width=0.49\textwidth]{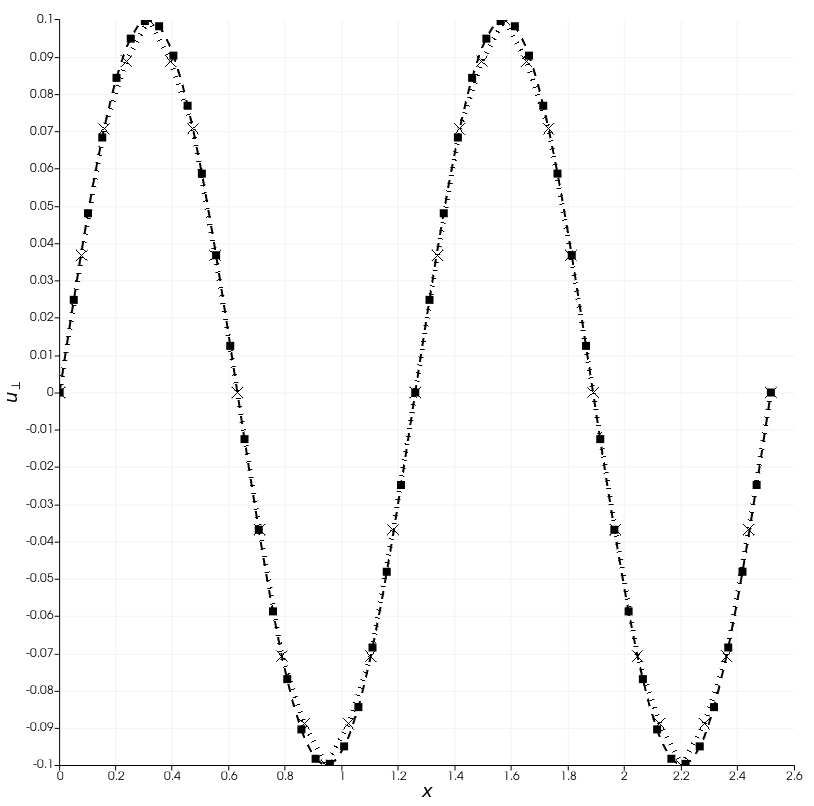}
    \caption{Plot of $v_z$, $B_z$, $v_{\perp} = \cos \alpha v_y -\sin \alpha v_x$ and $B_{\perp} = \cos \alpha B_y -\sin \alpha B_x$ for the Alfvén wave at $t=1$ (after one period of the wave), cuts at $y=0$. The squares are the reference (discretization with $N=64$ cells at $t=0$) the crosses are a coarse discretization with $N=16$ cells at $t=1$ and the dash line is a finer discretization with $N=64$ cells also at $t=1$. }
    \label{fig:plot_Alven}
\end{figure}

\begin{figure}
    \centering
    \includegraphics[width=0.49\textwidth]{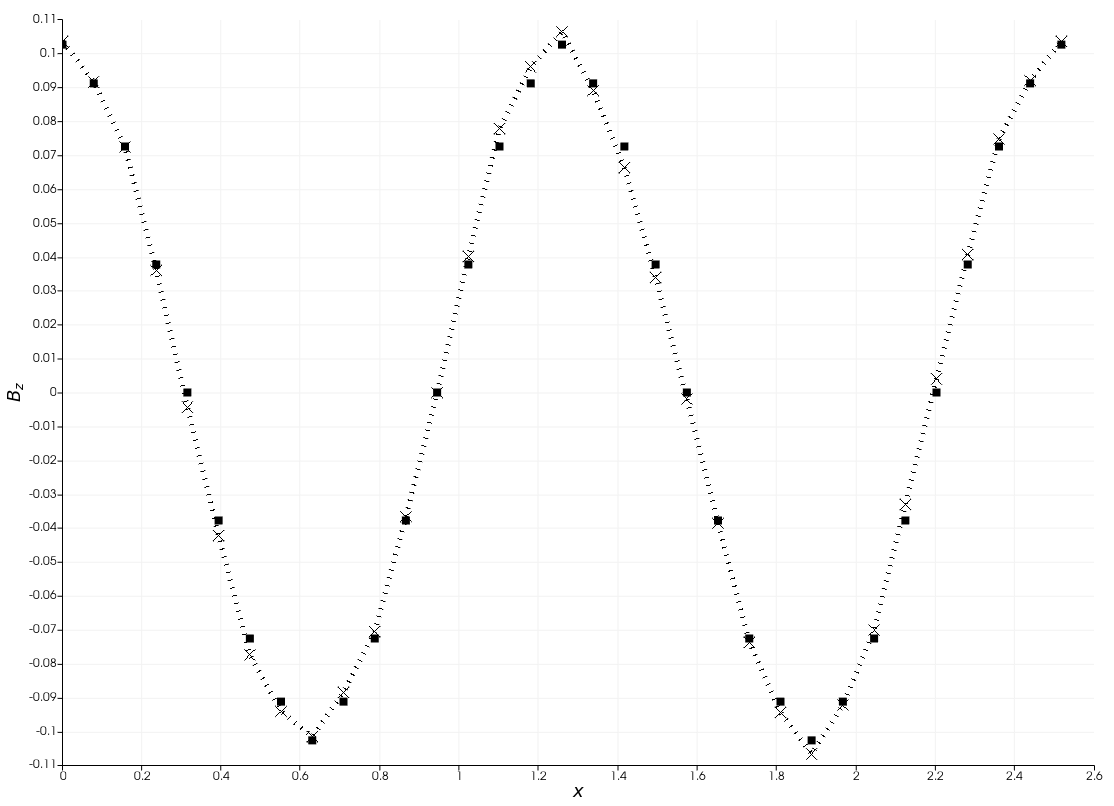}
    \includegraphics[width=0.49\textwidth]{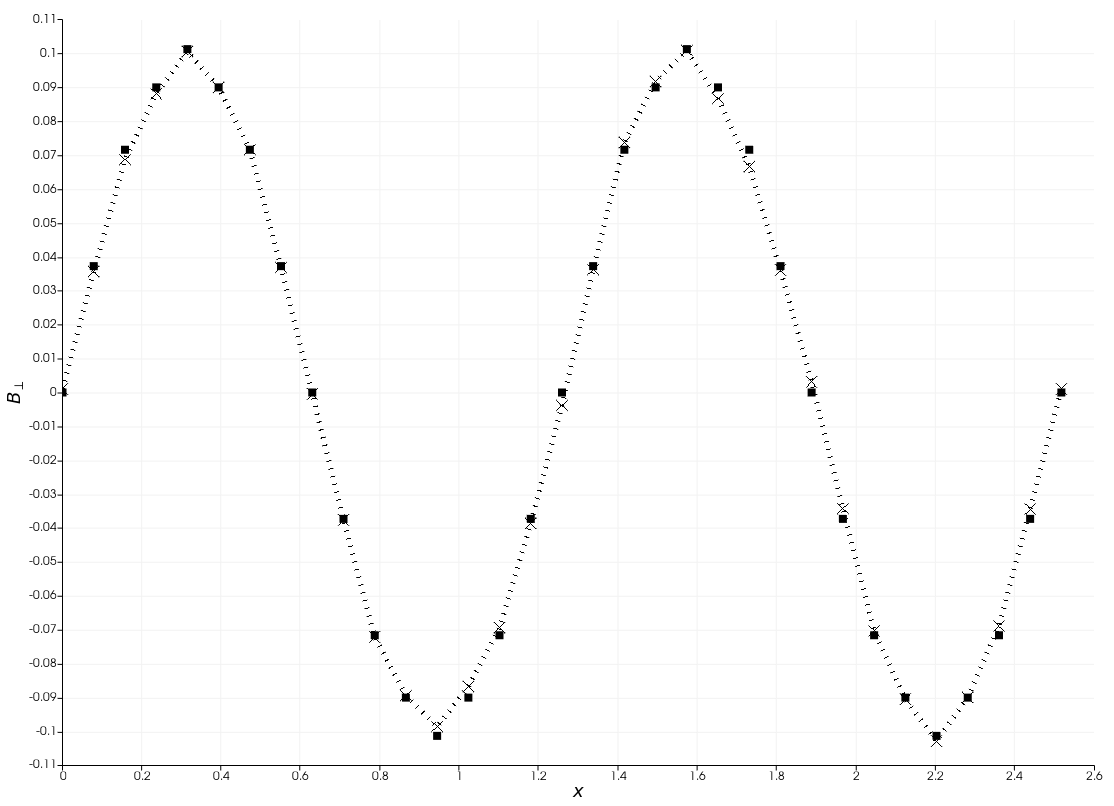}
    \caption{Plot of $B_z$ and $B_{\perp} = \cos \alpha B_y -\sin \alpha B_x$ for the Alfvén wave at $t=75$ (after 75 periods of the wave), cuts at $y=0$ with a coarse discretization ($N=16$). The squares are the reference 
    solution at $t=0$ and the crosses show the coarse solution at $t=75$.}
    \label{fig:plot_Alven_long}
\end{figure}

\subsection{MHD : Orszag-Tang vortex}
Our second MHD test is the Orszag-Tang vortex. Here the domain is a periodic box of size $[0, 2\pi]^2$ and the initial conditions are : 
\begin{align*}
\rho(x,y,0) &= \gamma^2 ~, \\
s(x,y,0) &= \gamma^2 \log\Big(\frac{\gamma}{(\gamma-1)\gamma^{2\gamma}}\Big) ~, \\
\uu (x,y,0) &= (-\sin(y), \sin(x)) ~,\\
\BB (x,y,0) &= (-\sin(y), \sin(2x)) ~ 
\end{align*}
with $\gamma = 5/3$. \Cref{fig:pressure_Orszag} presents the results of the simulations for this test. We plot the pressure at different time steps, first at $t=0.5$ where the dynamic is still smooth and we check that this pressure profile is in accordance with the results that can be found in the literature, even if the pressure is not a primary variable of our scheme and has to be recomputed from the density and the entropy. We then plot the pressure at $t=1$, soon after a shock is produced. We see that the front of the wave is still well captured, although some oscillations start to be visible in the higher pressure zone. This behaviour is due to the non-dissipative character of our scheme which prevents it to correctly resolve shocks. 
We also note that it is an expected behaviour, since our discretization relies on a variational principle which is only valid in smooth regimes.

\begin{figure}
	\centering
    \includegraphics[width=0.49\textwidth]{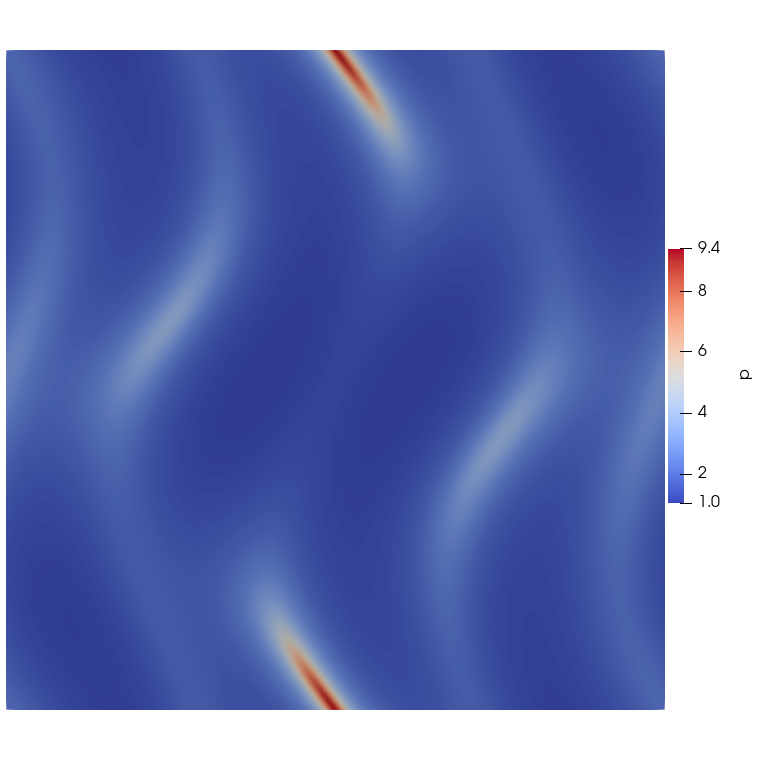}
    \includegraphics[width=0.49\textwidth]{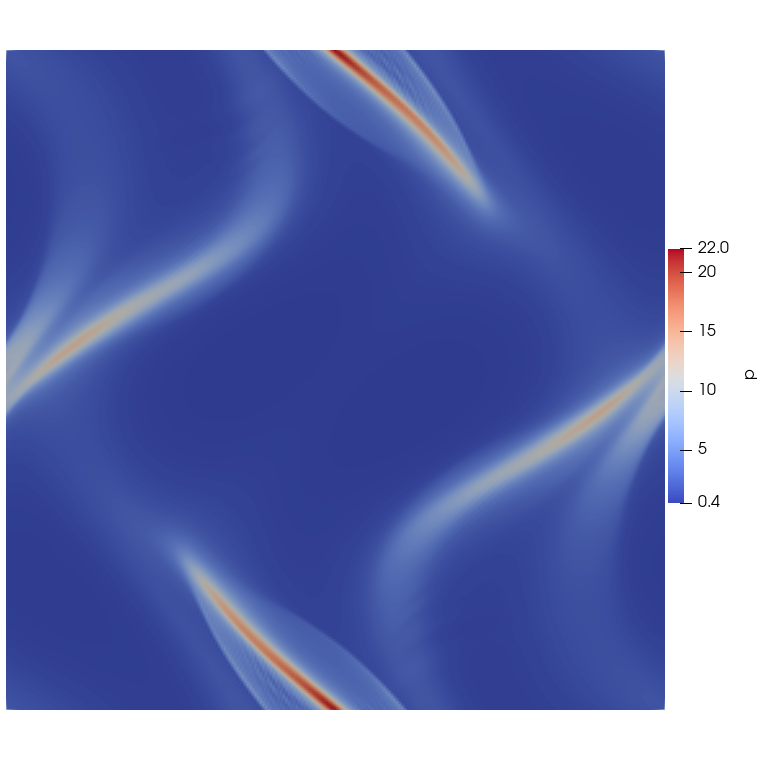}
    \caption{Plot of the pressure for the Orszag-Tang vortex at $t=0.5$ and $t=1$, 
    using spline spaces of minimal degree $p=2$ on a $256 \times 256$ grid, and a time step $\Delta t = 5\E{-4}$.}
    \label{fig:pressure_Orszag}
\end{figure}

\subsection{MHD : KHI with magnetic field}

Our final test is a magnetized Kelvin-Helmholtz instability. For this test, the domain is also a periodic rectangle $[0, 1] \times [-1, 1]$, and the initial conditions are similar to the shear tests shown above :
\begin{align*}
\rho(x,y,0) &= 1 ~, \\
s(x,y,0) &= -\log(\gamma-1) ~, \\
u_x (x,y,0) &= 0.5(T_\delta(y)-1) ~,\\
u_y (x,y,0) &= 0.1 \sin(2 \pi x) ~, \\
B_x (x,y,0) &= B_0 \\
B_y (x,y,0) &= 0. \\
\text{with } T_\delta(y) &= -\tanh((y - 0.5)/\delta)+\tanh((y + 0.5)/\delta) ~,
\end{align*}
with $\gamma = 7/5$. \Cref{fig:MKHI} shows the numerical vorticity computed for this test, and we can observe that as predicted by the theory \cite{miura1982nonlocal,berlok2019kelvin}, the solution becomes less unstable as $B_0$ increases. This last test confirms that our scheme is able to resolve complex interactions between the flow and the magnetic field, and to faithfully reproduce the instability or stability of known configurations without adding numerical dissipation that could change the results of such a study. This property makes us confident in the use of our variational approach to study inertial confinement devices where the understanding of instabilities is of primal importance.

\begin{figure}
	\centering
    \includegraphics[width=0.23\textwidth]{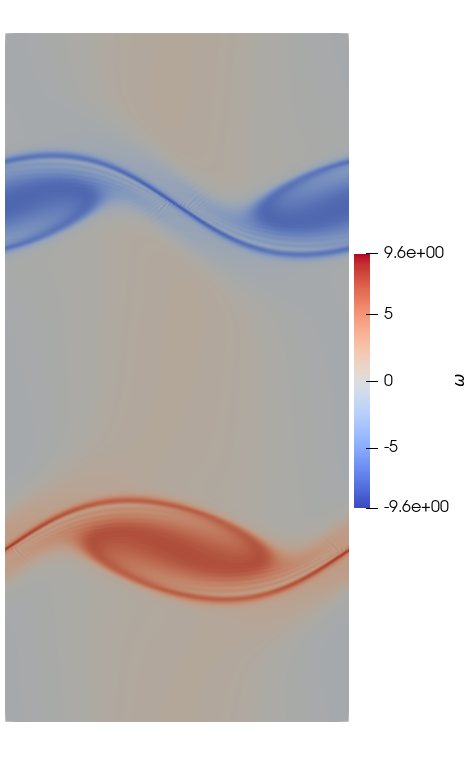}
    \includegraphics[width=0.23\textwidth]{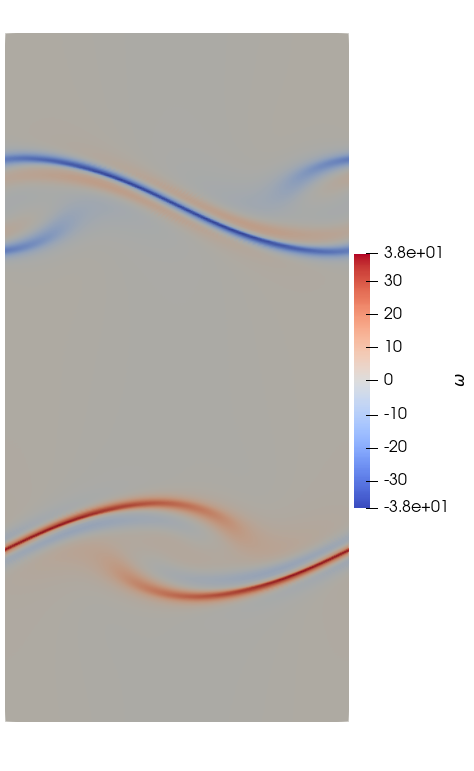}
    \includegraphics[width=0.23\textwidth]{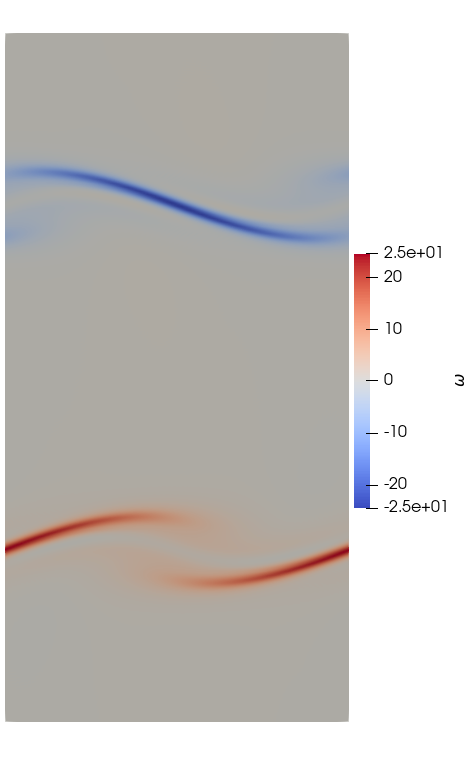}
    \includegraphics[width=0.23\textwidth]{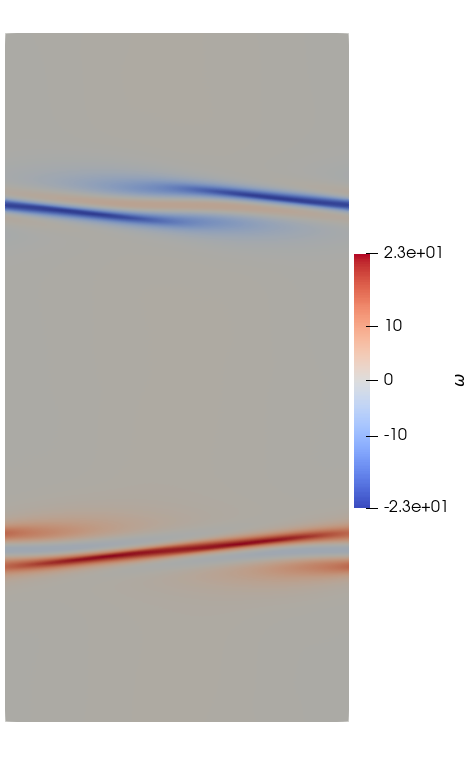}
    \caption{Plot of the vorticity at $t=2.0$ with $B_0=0,~ 0.2, ~ 0.4 \text{ and } 0.6$ for the magnetic Kelvin-Helmholtz instability,
    using spline spaces of minimal degree $p=1$ on a $256 \times 512$ grid, and a time step $\Delta t = 5\E{-4}$.}
    \label{fig:MKHI}
\end{figure}

\section{Conclusion and perspectives}
\label{sec:conclusion}
In this work we have proposed a variational discretization of the ideal MHD 
equations using the framework of Finite Element Exterior Calculus. 
Our discretization is based on transport operators built using the Cartan formula 
which allows us to closely mimic the variational principle underlying the MHD equations. 
Preservation properties of our scheme where proved first at the semi-discrete level 
and then at a fully discrete level, using a mid-point discretization in time. 
We then applied our framework on spline FEM spaces, using several fluid and MHD test cases. 
These tests have allowed us to prove the high order accuracy of our scheme, and they have also shown its 
very good preservation properties for various problems. 

Future works will focus on extending this scheme in several directions : to be able to cover more physics we will couple our MHD solver with (numerical) particles. This framework will allow us to model for instance the interaction between a bulk plasma and energetic particles in fusion devices. Being able to add some dissipation while preserving the variational structure of our scheme is also an interesting perspective to address regimes with discontinuous solutions. Finally, extending this framework to broken-FEEC spaces should allow for more flexible and local discretizations, in the scope of large simulations relying on parallel implementations.

\section*{Acknowledgements}
The authors thank the developers and maintainers from the Psydac team, in particular Yaman Güçlü, for his help in using the library for the purpose of this study and Julian Owezarek for his work on the linear algebra layer, that made this study easier. They also thank the developers of the Struphy library, in particular Stefan Possanner, for their help in understanding 
some parts of the library that were needed for this work. Inspiring discussions with François Gay-Balmaz, Evan Gawlik and Eric Sonnendrücker are also warmly acknowledged.

\bibliographystyle{elsarticle-num}
\bibliography{var-feec}

\end{document}